\newtheorem{thm}{Theorem}[section]
\newtheorem{theorem}[thm]{Theorem}
\newtheorem*{nnthm}{Theorem}
\newtheorem{lemma}[thm]{Lemma}
\newtheorem{prop}[thm]{Proposition}
\theoremstyle{definition}
\theoremstyle{remark}
\newtheorem{remark}[thm]{Remark}
\newtheorem{example}[thm]{Example}
\title[Local Selectivity of Orders in Central Simple Algebras]
{Local Selectivity of Orders in Central Simple Algebras}
\author {Benjamin Linowitz}
\address{Department of Mathematics\\ 
530 Church Street\\
University of Michigan\\
Ann Arbor, MI 48109}
\email[] {linowitz@umich.edu}
\urladdr{http://www-personal.umich.edu/~linowitz/home.html}
\author {Thomas R. Shemanske}
\address{Department of Mathematics\\ 
6188 Kemeny Hall\\
Dartmouth College\\
Hanover, NH 03755}
\email[] {thomas.r.shemanske@dartmouth.edu}
\urladdr{http://www.math.dartmouth.edu/~trs/ }
\date{\today}
\newcommand{\ds}{\displaystyle}
\newcommand{\ba}{{\mathbf a}}
\newcommand{\bb}{{\mathbf b}}
\newcommand{\bc}{{\mathbf c}}
\newcommand{\D}{{\mathcal D}}
\newcommand{\E}{{\mathcal E}}
\renewcommand{\L}{{\mathcal L}}
\newcommand{\M}{{\mathcal M}}
\newcommand{\fM}{{\mathfrak M}}
\newcommand{\N}{{\mathcal N}}
\newcommand{\fN}{{\mathfrak N}}
\renewcommand{\O}{{\mathcal O}} 
\newcommand{\0}{{\mathbf 0}}    
\newcommand{\fP}{{\mathfrak P}}
\newcommand{\fp}{{\mathfrak p}}
\newcommand{\bpi}{{\bm\pi}}
\newcommand{\Q}{\mathbb{Q}}
\newcommand{\R}{{\mathcal R}}
\newcommand{\Z}{\mathbb{Z}}
\newcommand{\la}{\langle}
\newcommand{\ra}{\rangle} 
\newcommand{\wH}{{\widehat H}}
\newcommand{\wL}{{\widehat L}}
\newcommand{\End}{\operatorname{End}}
\newcommand{\Gal}{\operatorname{Gal}}
\newcommand{\lcm}{\operatorname{lcm}}
\newcommand{\ord}{\operatorname{ord}}
\newcommand{\diag}{\operatorname{diag}}
\newcommand{\GL}{\operatorname{GL}}
\newcommand{\SL}{\operatorname{SL}}
\newcommand{\Mat}{\operatorname{M}}
\begin{document}

\subjclass[2010] {Primary 11R54; Secondary 11S45, 20E42}

\keywords{Order, central simple algebra, affine building, embedding}

\begin{abstract}
  Let $B$ be a central simple algebra of degree $n$ over a number
  field $K$, and $L\subset B$ a strictly maximal subfield.  We say
  that the ring of integers $\O_L$ is \textit{selective} if there
  exists an isomorphism class of maximal orders in $B$ no element of
  which contains $\O_L$.  Many authors have worked to characterize the
  degree to which selectivity occurs, first in quaternion algebras,
  and more recently in higher-rank algebras.  In the present work, we
  consider a local variant of the selectivity problem and
  applications.

  We first prove a theorem characterizing which maximal orders in a
  local central simple algebra contain the global ring of integers
  $\O_L$ by leveraging the theory of affine buildings for $SL_r(D)$
  where $D$ is a local central division algebra.  Then as an
  application, we use the local result and a local-global principle to
  show how to compute a set of representatives of the isomorphism
  classes of maximal orders in $B$, and distinguish those which are
  guaranteed to contain $\O_L$.  Having such a set of representatives
  allows both algebraic and geometric applications.  As an algebraic
  application, we recover a global selectivity result mentioned above,
  and give examples which clarify the interesting role of partial
  ramification in the algebra.

\end{abstract}

\maketitle

\section{Introduction}

Let $B$ be a central simple algebra of degree $n$ over a number field
$K$, and $L \subset B$ a strictly maximal (i.e., $[L:K] = n$) subfield of
$B$.  There exists at least one maximal order $\R$ of $B$ which
contains the ring of integers $\O_L$, and so every element of the
isomorphism class of $\R$ admits an embedding of $\O_L$. If there
exists an isomorphism class of maximal orders in $B$ no element of
which contains $\O_L$, then $\O_L$ is said to be \textit{selective}.
This is equivalent to no element of the isomorphism class admitting an
embedding of $\O_L$.

Many authors worked to characterize the degree to which selectivity
occurs: \cite{Chinburg-Friedman}, \cite{Chan-Xu},\cite{Guo-Qin},
\cite{Maclachlan}, \cite{Linowitz-selectivity} (in quaternion
algebras), and \cite{Arenas-Carmona-2003},
\cite{Linowitz-Shemanske-EmbeddingPrimeDegree},
\cite{Arenas-Carmona-2011-Representation-Fields},
\cite{Arenas-Carmona-2014-Selectivity-Division-Algebras} (in
higher-rank algebras).  The tools which have been employed vary from
the Bruhat-Tits tree in \cite{Chinburg-Friedman}, to representation
fields (a subfield of a spinor class field) in
\cite{Arenas-Carmona-2011-Representation-Fields}.  The results of
\cite{Arenas-Carmona-2011-Representation-Fields} are very general,
offering the proportion of isomorphism classes of maximal orders (an
element of) which contain the order $\O_L$ (or any of its suborders),
in terms of the index of the representation field in an associated
spinor class field.

In the present work, the authors continue their study (cf.
\cite{Linowitz-Shemanske-EmbeddingPrimeDegree}) of how the use of
Bruhat-Tits buildings can illuminate problems for higher rank algebras
as the Bruhat-Tits tree was used to answer selectivity questions in
the quaternion case \cite{Chinburg-Friedman}.  Locally, since all
maximal orders are conjugate, every maximal order admits an embedding
of $\O_L$, so a local selectivity question must be more discerning: to
characterize the maximal orders in a local central simple algebra
which contain $\O_L$.  This turns out to be both an interesting and
somewhat nuanced question.

To be more precise, we set some notation.  By Wedderburn's structure
theorem, we shall assume that $B = M_r(D)$ where $D$ is a central
division algebra over $K$ of degree $m$, so that $n = rm$. Let $\nu$
be any place of $K$, and denote the completion of $K$ at $\nu$ by
$K_\nu$, and when $\nu$ is a finite place, denote by $\O_\nu$ the
valuation ring of $K_\nu$. The completion of $B$ at $\nu$ is given by
\[B_\nu = K_\nu\otimes_K B \cong M_{r_\nu}(D_\nu),\]
where $D_\nu$ is a central division algebra over $K_\nu$ of degree
$m_\nu$, so that $n = rm = r_\nu m_\nu$ with $r \mid r_\nu$.  We say
that a place $\nu$ of $K$ \textit{splits} in $B$ if $m_\nu = 1$,
\textit{totally ramifies} in $B$ if $m_\nu = n$, and \textit{partially
  ramifies} in $B$ if $1 < m_\nu < n$. If $B$ totally ramifies at a
finite place $\nu$, there is a unique maximal order of $B_\nu$, so of
course $\O_L$ is contained in it, thus the only interest in local
selectivity arises when $\nu$ is not totally ramified.

As a consequence of the condition that $L$ is a strictly maximal
subfield of $B$, we know that for each place $\nu$ of $K$ and for all
places $\fP$ of $L$ lying above $\nu$, $m_\nu \mid [L_\fP:K_\nu]$ (the
Albert-Brauer-Hasse-Noether theorem), we have by (31.10) of
\cite{Reiner-book}, that each $L_\fP$ splits $D_\nu$ (and hence
$B_\nu$), and moreover by (28.5) of \cite{Reiner-book}, for each place
$\fP$ of $L$ with $\fP \mid \nu$, there is a smallest integer
$r_\fP\ge 1$ so that $L_\fP$ embeds in $M_{r_\fP}(D_\nu)$ as a
$K_\nu$-algebra; here $r_\fP = [L_\fP:K_\nu]/m_\nu$.
Theorem~\ref{thm:OLembedslocally} (which applies even in the
quaternion case) says:

\begin{nnthm}
  Let $B$ be a central simple algebra over a number field $K$ of
  dimension $n^2 \ge 4$ and $L$ a degree $n$ field extension of $K$
  which is contained in $B$. Let $\nu$ be a finite place of $K$ which
  splits or is partially ramified in $B$, so
  $B_\nu = M_{r_\nu}(D_\nu)$ with $r_\nu > 1$, and $D_\nu$ a central
  division algebra over $K_\nu$ of degree $m_\nu$. Assume that the
  place $\nu$ is unramified in $L$, and let $\{\fP_1, \dots, \fP_g\}$
  be the set of places of $L$ lying above $\nu$.  As above, let
  $r_{\fP_i} = [L_{\fP_i}: K_\nu]/m_\nu$.  Then $\O_L$ is contained in
  the maximal orders of $B_\nu$ represented by the homothety class
  $[\mathcal L] =[a_1, \dots, a_{r_\nu}] \in \Z^{r_\nu}/\Z(1,\dots,1)$
  if and only if there are $\ell_i \in \Z$ such that
  $[\mathcal L] = [\underbrace{\ell_1,
    \dots,\ell_1}_{r_{\fP_1}},\underbrace{\ell_2,
    \dots,\ell_2}_{r_{\fP_2}},\dots, \underbrace{\ell_g,
    \dots,\ell_g}_{r_{\fP_g}}]$.
\end{nnthm}

There are a number of applications of such a local result.  As a
primary application, it allows us to construct a set of
representatives of all the isomorphism classes of maximal orders in
the global algebra $B$, and specify those which are guaranteed to
contain $\O_L$.

This is turn has at least two other applications, one algebraic and
one geometric.  In terms of the global selectivity problem, it allows
one to compute not simply the selectivity proportion for $\O_L$, but
distinguish those classes which necessarily admit an embedding of
$\O_L$.  The same computation of explicit representatives of maximal
orders in $B$ also can be used in geometric realms such the
development of a higher-dimensional analog of a construction of
Vigner\'as \cite{Vigneras-isospectral} of isospectral non-isometric
Riemann surfaces (e.g., \cite{Lubotzky-Samuels-Vishne}).  Explicit
characterization of these maximal orders allows the geometry of the
corresponding manifold to be detailed, e.g., computation of the
geodesic length spectrum.

As an algebraic application, we recover a global selectivity result
mentioned above, and give an explicit example which demonstrates the
effect of partial ramification in the algebra.

\section{Local Results}\label{sec:localtheory} Because our main
application of these local results will be to construct a
distinguished set of representatives for the isomorphism classes of
maximal orders in the global algebra $B$, and then recover a
selectivity result, we retain global notation throughout to allow for
some dovetailing of local and global remarks.

Let $\nu$ be a finite place of $K$, and
$B_\nu \cong M_{r_\nu}(D_{\nu})$, with $D_\nu$ a central division
algebra of degree $m_\nu$ over $K_\nu$.  Recall
$[L:K] = n = \deg_K(B) = rm = r_\nu m_\nu$.  We shall note in
Theorem~\ref{thm:divisionalgebra-noselectivity}, if there is a finite
place $\nu$ for which $B_\nu$ is a division algebra ($r_\nu = 1$),
there can be no selectivity, so we assume for this section that
$r_\nu > 1$.  

Recall from the introduction that for each place $\nu$ of $K$, and
each place $\fP$ of $L$ lying above $\nu$,
$r_\fP = [L_\fP:K_\nu]/m_\nu \ge 1$ is the smallest integer so that
$L_\fP$ embeds in $M_{r_\fP}(D_\nu)$ as a $K_\nu$-algebra.

We note that
\[\sum_{\fP\mid \nu} r_\fP = \sum_{\fP\mid \nu}
\frac{[L_\fP:K_\nu]}{m_\nu} = \frac{[L:K]}{m_\nu} = \frac{n}{m_\nu} =
r_\nu,
\]
and this means that

\begin{equation}
  \label{eq:1}
  K_\nu \otimes_K L \cong \bigoplus_{\fP\mid \nu} L_\fP \hookrightarrow
  \bigoplus_{\fP\mid \nu}M_{r_\fP}(D_\nu) \hookrightarrow M_{r_\nu}(D_\nu),
\end{equation}
with the last embedding as blocks along the diagonal.

We have fixed a global maximal order $\R$ in $B$ which contains
$\O_L$.  We define completions $\R_\nu \subseteq B_\nu$ by:
\[
\R_\nu =
\begin{cases}
  \O_\nu \otimes_{\O_K} \R&\textrm{if $\nu$ is finite}\\
  K_\nu\otimes_{\O_K} \R  = B_\nu&\textrm{if $\nu$ is infinite.}\\
\end{cases}
\]
For finite places $\nu$, we know by (17.3) of \cite{Reiner-book},
that $\R_\nu$ is conjugate to $M_{r_\nu}(\Delta_\nu)$ where
$\Delta_\nu$ is the unique maximal order of $D_\nu$, so we assume that
$B_\nu$ has been identified with $M_{r_\nu}(D_\nu)$ in such a way that
$\R_\nu = M_{r_\nu}(\Delta_\nu)$.  Since all maximal orders of
$M_{r_\fP}(D_\nu)$ are conjugate to $M_{r_\fP}(\Delta_\nu)$ we may, by
a change of basis, adjust the embeddings
$L_{\fP} \hookrightarrow M_{r_\fP}(D_\nu)$ so that the ring of
integers $\O_\fP \subset M_{r_\fP}(\Delta_\nu)$.  Now by Exercise 5.4
(p. 76) of \cite{Reiner-book}, $\O_\nu$ is a faithfully flat
$\O_K$-module and the containment of $\O_L \subset \R$ extends to one
of
$\O_\nu\otimes_{\O_K} \O_L \subset \O_\nu\otimes_{\O_K} \R = \R_\nu$.
We identify $\O_L$ with its image $1\otimes \O_L$, so will simply
write $\O_L \subset \R_\nu$.  More precisely, we will identify $\O_L$
with its image in $\bigoplus_{\fP\mid \nu} \O_\fP$ via:
\begin{equation}
  \label{eq:integralblockstructure}
  \O_L \subset \O_\nu\otimes_{\O_K} \O_L \hookrightarrow
  \bigoplus_{\fP\mid \nu} \O_\fP \subset \bigoplus_{\fP\mid \nu}
  M_{r_\fP}(\Delta_\nu) \subset M_{r_\nu}(\Delta_\nu) = \R_\nu,
\end{equation}
where we are using the subset notation to identify the object and its
image.

Fix a uniformizing parameter $\bpi = \bpi_{D_\nu}$ of the maximal
order $\Delta_\nu$, and let \newline
$d_k^\ell =
\diag(\underbrace{\bpi^\ell,\dots,\bpi^\ell}_k,1,\dots,1)\in
M_{r_\nu}(D_\nu)$.  Put
\begin{equation}\label{eq:blockstr}
  \R(k,\ell) \colonequals d_{k}^\ell \R_\nu d_k^{-\ell} = d_{k}^\ell M_{r_\nu}(\Delta_\nu) d_k^{-\ell} = 
  \begin{pmatrix}
    M_k(\Delta_\nu)& \bpi^{\ell}M_{k\times r_\nu-k}(\Delta_\nu)\\
    \bpi^{-\ell}M_{r_\nu-k\times k}(\Delta_\nu)& M_{r_\nu-k}(\Delta_\nu)\\
  \end{pmatrix}\subset M_{r_\nu}(D_\nu).
\end{equation}

Note that
$\R(0,\ell) = \R(r_\nu,\ell) =\R(k,0) =\R_\nu= M_{r_\nu}(\Delta_\nu)$.
If we let $\fP_1, \dots, \fP_g$ denote all the places of $L$ lying
above $\nu$, then from
equations~(\ref{eq:integralblockstructure}),(\ref{eq:blockstr}) above,
it is evident that for all $\ell_1, \dots, \ell_g \in \Z$,
\begin{equation}
  \O_L \subset \R(r_{\fP_1},\ell_1) \ \cap \R(r_{\fP_1}+r_{\fP_2},\ell_2) 
  \cap \cdots \cap \R(r_{\fP_1}+\cdots+r_{\fP_g},\ell_g), 
\end{equation}
that is,
\begin{equation}\label{eq:orderscontainingomega}
  \O_L \subset \bigcap_{\ell_i \in \Z}\big[
  \R(r_{\fP_1},\ell_1) \ \cap \R(r_{\fP_1}+r_{\fP_2},\ell_2) 
  \cap \cdots \cap \R(r_{\fP_1}+\cdots+r_{\fP_g},\ell_g) \big] = 
  \bigoplus_{\fP\mid \nu} M_{r_\fP}(\Delta_\nu)\subset \R_\nu.
\end{equation}

\subsection{Affine buildings and type distance.}
We now translate this to the language of affine buildings.  By (17.4)
of \cite{Reiner-book}, we know that every maximal order in $B_\nu$ has
the form $\End_{\Delta_\nu}(\Lambda)$ where $\Lambda$ is a full (i.e.,
rank $r_\nu$), free (left) $\Delta_\nu$-lattice in $D_\nu^{r_\nu}$.
We recall that a maximal order is characterized completely by the
homothety class of its associated lattice, and homothety classes of
lattices in $D_\nu^{r_\nu}$ are a very concrete way in which to
characterize the vertices of the affine building associated to
$\SL_{r_\nu}(D_\nu)$ (see section 3 of \cite{Abramenko-Nebe}, or
\cite{Ronan} Ch.9, \S2). We know $\GL_{r_\nu}(D_\nu)$ acts
transitively on the free $\Delta_\nu$-lattices of rank $r_\nu$ and
acts invariantly on the homothety classes.  Using that the maximal
order $\Delta_\nu$ of $D_\nu$ is a discretely valued ring with
$\bpi = \bpi_{D_\nu}$ a uniformizer, we put $\ord_{\bpi}$ to be the
exponential valuation on $D_\nu$. Then we note that $\ord_{\bpi}$ is
trivial on the commutator $[D^\times,D^\times]$, so for each
$g \in \GL_{r_\nu}(D_\nu)$, $\ord_{\bpi}(\det(g))$ is a well-defined
integer, where $\det(\cdot)$ is the Dieudonn\'e determinant.  It is
then natural to define the \textit{type} of a vertex as an integer
modulo $r_\nu$ as follows (see \cite{Ronan}).  Let $\Lambda$ be a
(free of rank $r_\nu$) $\Delta_\nu$-lattice whose homothety class is
assigned the type 0. For another such lattice $\Gamma$, let $g$ be any
element of $\GL_{r_\nu}(D_\nu)$ so that $\Gamma = g(\Lambda)$.  Then
the class of $\Gamma$ is assigned type
$\ord_{\bpi}(\det(g)) \pmod {r_\nu}$, which is well-defined on the
homothety class since we are viewing the type modulo~$r_\nu$.

The simplicial structure of the building associated to
$\SL_{r_\nu}(D_\nu)$ is reflected through its vertex types. In
particular, the $r_\nu$ vertices of any chamber have types $0$ through
$(r_\nu-1)$.  In relating the vertices, we utilize the invariant
factor theory which applies to free $\Delta_\nu$-lattices of
rank~$r_\nu$.  Let $\Gamma$ and $\Lambda$ be two rank $r_\nu$ free
$\Delta_\nu$-lattices. Since we are working with homothety classes, we
may assume that $\Gamma \subseteq \Lambda$.  By (17.7) of
\cite{Reiner-book}, given two such lattices, there exists a basis
$\{e_1, \dots, e_{r_\nu}\}$ of $D_\nu^{r_\nu}$ and rational integers
$0\le a_1 \le \cdots \le a_r$ so that
\[\Lambda = \bigoplus_{i=1}^{r_\nu} \Delta_\nu e_i
\quad\mbox{and}\quad \Gamma = \bigoplus_{i=1}^{r_\nu} \Delta_\nu
\bpi^{a_i} e_i.
\]
Suppose that $\E = \End_{\Delta_\nu}(\Lambda)$, and
$\E' = \End_{\Delta_\nu}(\Gamma)$. Using the invariant factor
decomposition above, we define the \textit{type distance}
$td_\nu(\E, \E')$ to be
\[td_\nu(\E, \E') = \sum_{i=1}^{r_\nu} a_i \pmod {r_\nu}.
\]
We note that this definition depends only upon the homothety class of
the lattices. While it is true that
$td_\nu(\E, \E') \equiv -td_\nu(\E', \E)\pmod {r_\nu}$, our main
concern will be when the type distance $td_\nu(\E, \E')$ is divisible
by some integer, so the order will be of little consequence.  This
definition of type distance generalizes the one in
\cite{Linowitz-Shemanske-EmbeddingPrimeDegree}, where whenever the
algebra was not totally ramified, it was split, so that $r_\nu = n$
and $D_\nu = K_\nu$.

\subsection{Local selectivity}

Pick a basis $\{\alpha_1, \dots, \alpha_{r_\nu}\}$ of $D_\nu^{r_\nu}$
(and hence in particular fix an apartment of the building associated
to $SL_{r_\nu}(D_\nu)$), so that with respect to this basis,
$\R_\nu = M_{r_\nu}(\Delta_\nu) = \End_{\Delta_\nu}(\Lambda)$, where
$\Lambda = \oplus_{i=1}^{r_\nu} \Delta_\nu \alpha_i$. Also we have
$\R(k,\ell) = \End_{\Delta_\nu}(\M(k,\ell))$ where
$\M(k,\ell) = \oplus_{i=1}^{k}\bpi^\ell\Delta_\nu \alpha_i \oplus
\oplus_{i={k+1}}^{r_\nu}\Delta_\nu\alpha_i$
and $\bpi$ is our fixed uniformizer in $\Delta_\nu$.  As usual, this
maximal order in $B_\nu$ can be represented by the homothety class of
the lattice $\M(k,\ell)$,
$[\M(k,\ell)] \colonequals [\underbrace{\ell,\dots,\ell}_k,0,\dots,0]
\in \Z^{r_\nu}/\Z(1,\dots,1)$.
Observe that $\R(k,\ell)$ has type $k\ell\pmod {r_\nu}$.

With the notation fixed as above, we characterize precisely which
maximal orders in this apartment contain $\O_L$. The theorem is valid
even in the quaternion case ($n=2$).  We shall continue to assume
$r_\nu > 1$ (i.e. $\nu$ not totally ramified in $B$), otherwise
$B_\nu$ has a unique maximal order, which must clearly contain $\O_L$.

\begin{thm}\label{thm:OLembedslocally}
  Let $B$ be a central simple algebra over a number field $K$ of
  dimension $n^2 \ge 4$ and $L$ a degree $n$ field extension of $K$
  which is contained in $B$. Let $\nu$ be a finite place of $K$ which
  splits or is partially ramified in $B$, so
  $B_\nu = M_{r_\nu}(D_\nu)$ with $r_\nu > 1$, and $D_\nu$ a central
  division algebra over $K_\nu$ of degree $m_\nu$. Assume that the
  place $\nu$ is unramified in $L$, and let $\{\fP_1, \dots, \fP_g\}$
  be the set of places of $L$ lying above $\nu$.  As above, let
  $r_{\fP_i} = [L_{\fP_i}: K_\nu]/m_\nu$.  Then $\O_L$ is contained in
  the maximal orders of $B_\nu$ represented by the homothety class
  $[\mathcal L] =[a_1, \dots, a_{r_\nu}] \in \Z^{r_\nu}/\Z(1,\dots,1)$
  if and only if there are $\ell_i \in \Z$ such that
  $[\mathcal L] = [\underbrace{\ell_1,
    \dots,\ell_1}_{r_{\fP_1}},\underbrace{\ell_2,
    \dots,\ell_2}_{r_{\fP_2}},\dots, \underbrace{\ell_g,
    \dots,\ell_g}_{r_{\fP_g}}]$.
  
\end{thm}

\begin{proof}[Proof of Theorem]
  Consider equation~(\ref{eq:orderscontainingomega}).  We know that
  $\O_L$ is contained in
  $\R(r_{\fP_1},\ell_1) \ \cap \R(r_{\fP_1}+r_{\fP_2},\ell_2) \cap
  \cdots \cap \R(r_{\fP_1}+\cdots+r_{\fP_g},\ell_g)$
  for any choice of $\ell_i\in \Z$.  These orders correspond to
  homothety classes of lattices
  $[\M(r_{\fP_1}+\cdots+r_{\fP_i},\ell_i)]=\ell_i
  [\M(r_{\fP_1}+\cdots+r_{\fP_i},1)]=\ell_i
  [\underbrace{1,\dots,1}_{r_{\fP_1}+\cdots+r_{\fP_i}},0,\dots,0]$
  as an element of $\Z^{r_\nu}/\Z(1,\dots,1)$.  In
  \cite{Ballantine-Rhodes-Shemanske}, it is shown that walks in an
  apartment are consistent with the natural group action on
  $\Z^{r_\nu}/\Z(1,\dots,1)$, and by \cite{Shemanske-Split} the
  intersection of any finite number of maximal orders (containing
  $\Delta_\nu^{r_\nu}$) in an apartment is the same as the
  intersection of all the maximal orders in the convex hull they
  determine.  The references above discuss the case where
  $D_\nu = K_\nu$, but the arguments generalize trivially to the
  setting of a vector space over $D_\nu$ instead of $K_\nu$, as does
  the theory of buildings.  Using these observations, we deduce that
  $\O_L$ is contained in maximal orders corresponding to
  \begin{align*}
    [\M(r_{\fP_1},\ell_1) &+ \M(r_{\fP_1}+r_{\fP_2}, \ell_2)
                            + \cdots + \M(r_{\fP_1} + \cdots + r_{\fP_g},\ell_g)] = \\
                          & [\underbrace{\ell_1+\cdots+\ell_g,
                            \dots,\ell_1+\cdots+\ell_g}_{r_{\fP_1}},\underbrace{\ell_2+\cdots+\ell_g,
                            \dots,\ell_2+\cdots+\ell_g}_{r_{\fP_2}},\dots, \underbrace{\ell_g,
                            \dots,\ell_g}_{r_{\fP_g}}].
  \end{align*}

  Since the $\ell_i\in \Z$ are arbitrary, a simple change of variable
  ($\ell_k + \cdots+ \ell_g \mapsto \ell_k$) shows that $\O_L$ is
  contained in the maximal orders specified in the proposition.  We
  now show these are the only maximal orders in the apartment which
  contain $\O_L$. To proceed,  we need to set some notation and
  prove a technical lemma.

  For any place $\fP$ of $L$ lying above $\nu$, we have (by
  assumption) that $L_\fP/K_\nu$ is an unramified extension of degree
  $f \colonequals r_\fP m_\nu$.  Let $\overline\O_\fP$ and
  $\overline \O_\nu$ be the associated residue fields and let
  $q = |\overline \O_\nu|$.  Now let $\omega$ be a primitive $q^f-1$
  root of unity over $K_\nu$, so that $L_\fP = K_\nu(\omega)$.  We
  know that $D_\nu$ contains an inertia field, $W_\nu$, which is
  unique up to conjugacy.  It is an unramified extension of $K_\nu$
  and a maximal subfield of $D_\nu$, having degree
  $[W_\nu:K_\nu] = m_\nu$. Without loss, we may assume that
  $K_\nu \subseteq W_\nu \subseteq L_\fP$, with $W_\nu$ generated over
  $K_\nu$ by an appropriate power of $\omega$ (since
  $q^{m_\nu-1} \mid q^f-1)$.  Now let $h = \min_{K_\nu}(\omega)$ be
  the minimal polynomial of $\omega$ over $K_\nu$.  As $\omega$ is
  integral, we know $h \in \O_\nu[x]$.

  \begin{prop}\label{prop:unramifiedlocalextensions}
    From Theorem 5.10 and Corollary 5.11 of \cite{Reiner-book}, we
    recall
    \begin{compactitem}
    \item
      $\O_\fP = \O_\nu[\omega]; \quad\overline \O_\fP =
      \overline\O_\nu[\overline \omega]$.
    \item $\overline h = \min_{\overline \O_\nu}(\overline \omega)$
      and is separable.
    \item $L_\fP/K_\nu$ and $\overline\O_\fP/\overline \O_\nu$ are
      cyclic extensions with isomorphic Galois groups.
    \end{compactitem}
  \end{prop}

  The technical lemma we require is:
  \begin{lemma}\label{lemma:IntegralClosure}
    Let $R$ be the valuation ring of $\nu$ in $K$, and $S$ its
    integral closure in $L$.  Suppose that $\E$ is a ring containing
    both $R$ and $\O_L$.  Then $S \subset \E$.
  \end{lemma}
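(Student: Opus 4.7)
The plan is to identify $S$ concretely as the subring of $L$ generated by $\O_L$ together with $R$, and then to observe that both of these subrings sit inside $\E_\fp$.

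First, I would establish the identity $S = \O_L \cdot R$ as subrings of $L$. Any $y \in S$ satisfies a monic relation $y^n + r_1 y^{n-1} + \cdots + r_n = 0$ with coefficients $r_i \in R = (\O_K)_\fp$. Writing each $r_i = a_i/s_i$ with $a_i \in \O_K$ and $s_i \in \O_K \setminus \fp$, and setting $s = s_1 \cdots s_n \in \O_K \setminus \fp$, multiplying the relation by $s^n$ yields a monic polynomial equation for $sy$ with coefficients in $\O_K$. Hence $sy \in \O_L$, so $y = (sy)\, s^{-1} \in \O_L \cdot R$. The reverse containment is immediate since both $\O_L$ and $R$ are integral over $R$.

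Next, I would show $R \subset \E_\fp$. Since $\E_\fp$ is a (maximal) order in $B_\fp \cong M_n(K_\fp)$, by definition it is an $\O_{K_\fp}$-subalgebra of $B_\fp$ and therefore contains $\O_{K_\fp}$. The inclusion $K \hookrightarrow K_\fp$ sends $R$, consisting of elements of nonnegative $\fp$-adic valuation, into $\O_{K_\fp}$. Consequently $R \subset \O_{K_\fp} \subset \E_\fp$.

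Finally, combining the hypothesis $\O_L \subset \E_\fp$ with $R \subset \E_\fp$, and using the fact that $R \subset K$ lies in the center of $B_\fp$ and hence commutes with every element of $\E_\fp$, the subring of $B_\fp$ generated by $\O_L \cup R$ is precisely the ring of products $\O_L \cdot R = S$. Thus $S \subset \E_\fp$. There is no genuine obstacle in the argument; the only content is the elementary identification $S = \O_L \cdot R$, after which centrality of $R$ in $B_\fp$ and the order axiom $\O_{K_\fp} \subset \E_\fp$ finish the proof.
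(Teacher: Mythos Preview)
Your proof is correct, and it is genuinely different from the paper's.

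The paper identifies $S$ as the semi-localization $T^{-1}\O_L$ with $T = \O_L \setminus (\fP_1\cup\cdots\cup\fP_g)$, and then must argue that any $\beta \in T$ becomes a unit in $\E_\fp$. It does this by invoking the explicit block embedding $\O_L \hookrightarrow \oplus_i M_{f_i}(\O_\fp)$ from equation~(\ref{eq:integralblockstructure}) to compute $\det(\beta)=\prod_i N_{L_{\fP_i}/K_\fp}(\beta)\in \O_\fp^\times$. Your approach instead localizes only at the smaller multiplicative set $\O_K\setminus\fp$, which already suffices to produce $S$, and then the required inverses lie in $R\subset\O_{K_\fp}$, which is contained in $\E_\fp$ simply by the definition of an $\O_{K_\fp}$-order. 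This avoids any appeal to the block structure or to norms, and in particular does not use that $\fp$ is unramified in $L$ or that $B_\fp$ is split; it works for any maximal (indeed any) $\O_{K_\fp}$-order in any $K_\fp$-algebra. The paper's argument, while less economical here, is consistent with the matrix machinery that drives the proof of Theorem~\ref{thm:OLembedslocally}. One cosmetic remark: your use of $n$ for the degree of the integral equation collides with the ambient $n=[L:K]$; you may want to rename it.
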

  \begin{proof}
    By Corollary 5.22 of \cite{Atiyah-Macdonald}, $S$ is the
    intersection of all valuation rings of $L$ which contain $R$. The
    valuation ring $R$ is equal to the localization $D^{-1}\O_K$ where
    $D = \O_K \setminus \nu\O_K$, and the valuation rings of $L$ which
    contain $R$ are precisely the localizations of $\O_L$ at the
    places $\fP_1, \dots, \fP_g$ of $L$ which lie above $\nu$. By p43
    of \cite{Reiner-book}, the intersection of these localizations,
    $S$, is equal to the localization $T^{-1}\O_L$, where
    $T = \O_L \setminus (\fP_1\cup \cdots \cup \fP_g)$.

    It is easy to see that $D \subseteq T$ since if
    $\alpha \in D= \O_K\setminus \nu\O_K$, we have $\alpha \in \O_L$
    and if $\alpha \in \fP_i$ for some $i$, then
    $\alpha \in \fP_i\cap \O_K = \nu\O_K$, a contradiction.  So
    $D^{-1}\O_L \subseteq T^{-1}\O_L$.  To show equality, we need only
    show that for $\beta \in T$, $\beta^{-1} \in D^{-1}\O_L$. Since
    $\beta \in \O_L$ we know that
    $N_{L/K}(\beta)\colonequals \beta \tilde\beta \in \O_K$, which
    means $\tilde \beta = \beta^{-1} N_{L/K}(\beta) \in L$ and is
    integral, hence in $\O_L$, so
    $\beta^{-1} = \tilde\beta/N_{L/K}(\beta)$, and we need only check
    that $N_{L/K}(\beta) \in D$.  Suppose to the contrary that
    $N_{L/K}(\beta) \in \nu\O_K$.  Then we show that $\nu \in \fP_i$
    for some $i$, a contradiction.  We use the extension of the norm
    to ideals and that $N_{L/K}(\beta \O_L) = N_{L/K}(\beta)\O_K$. If
    we write $\beta\O_L = \fP_1^{m_1}\cdots \fP_g^{m_g} \mathfrak Q$
    where $\mathfrak Q$ is an ideal place to the $\fP_i$, and let
    $\mathfrak q = \mathfrak Q\cap \O_K$, then
    $N_{L/K}(\beta\O_L) = (\nu\O_K)^{\sum_{i=1}^g m_if_i}\mathfrak
    q^f$
    where $f_i = f(\fP_i:\nu)$ and $f= f(\mathfrak Q:\mathfrak q)$ are
    the corresponding inertial degrees.  So
    $N_{L/K}(\beta) \in \nu\O_K$ if and only if some $m_i >0$ which is
    to say that $\beta\in \fP_i$, a contradiction.  Thus we have that
    $S$, the integral closure of $R$ in $L$, can be expressed as
    $D^{-1}\O_L= R\cdot \O_L$, so any ring $\E$ containing $R$ and
    $\O_L$ contains $S$.
  \end{proof}

  Continuing now with the proof of Theorem~\ref{thm:OLembedslocally},
  denote $\fp$ denote the two-sided ideal $\bpi\Delta_\nu$ of
  $\Delta_\nu$, and suppose that $\O_L$ is contained in a maximal
  order $\Lambda(a_1, \dots, a_{r_\nu})$ where
  \begin{align*}
    \Lambda(a_1, \dots, a_{r_\nu}) &= \diag(\bpi^{a_1}, \dots,
                                     \bpi^{a_{r_\nu}})M_{r_\nu}(\Delta_\nu)\diag(\bpi^{a_1}, \dots, \bpi^{a_{r_\nu}})^{-1} =\\
                                   &\begin{pmatrix}
                                     \Delta_\nu&\fp^{a_1 - a_2}&\fp^{a_1-a_3}& \dots&\fp^{a_1-a_{r_\nu}}\\
                                     \fp^{a_2-a_1}&\Delta_\nu&\fp^{a_2-a_3}&\dots&\fp^{a_2 - a_{r_\nu}}\\
                                     \fp^{a_3-a_1}&\fp^{a_3-a_2}&\ddots&\dots&\fp^{a_3 - a_{r_\nu}}\\
                                     \vdots&\vdots&&\Delta_\nu&\vdots\\
                                     \fp^{a_{r_\nu} - a_1}&\dots&&\fp^{a_{r_\nu} - a_{r_\nu-1}}&\Delta_\nu
                                   \end{pmatrix},
  \end{align*}
  that is $\Lambda(a_1, \dots, a_{r_\nu})$ corresponds to the
  homothety class of the lattice $[a_1, \dots, a_{r_\nu}]$ relative to
  our fixed basis $\{\alpha_1, \dots, \alpha_{r_\nu}\}$ of
  $D_\nu^{r_\nu}$.  By equation~(\ref{eq:orderscontainingomega}), we
  can reorder subsets of the basis
  $\{\alpha_1,\dots, \alpha_{r_{\fP_1}}\}$,
  $\{\alpha_{r_{\fP_1}+1}, \dots, \alpha_{r_{\fP_1}+r_{\fP_2}}\}$,
  \dots,
  $\{\alpha_{r_{\fP_1}+\dots+r_{\fP_{g-1}}+1}, \dots,
  \alpha_{r_\nu}\}$
  so that equation~(\ref{eq:orderscontainingomega}) remains valid and
  $a_1 \le \cdots\le a_{r_{\fP_1}}$,
  $a_{r_{\fP_1}+1} \le \cdots \le a_{r_{\fP_1}+r_{\fP_2}}$, \dots,
  $a_{r_{\fP_1}+\cdots+r_{\fP_{g-1}}+1} \le \cdots \le a_{r_\nu}$.

  Now we assume that $[a_1, \dots, a_{r_\nu}]$ is not of the form
  $[\underbrace{\ell_1, \dots,\ell_1}_{r_{\fP_1}},\underbrace{\ell_2,
    \dots,\ell_2}_{r_{\fP_2}},\dots, \underbrace{\ell_g,
    \dots,\ell_g}_{r_{\fP_g}}]$
  for $\ell_i \in \Z$.  Since we can permute the order in which we
  list the places $\fP_i$ of $L$ lying above $\nu$, we may assume that
  there is an $r_0$ with $1 \le r_0 < r_{\fP_1}$ so that
  $a_1 = \cdots = a_{r_0}< a_{r_0+1} \le \cdots \le a_{r_{\fP_1}}$.
  From equation~(\ref{eq:integralblockstructure}), we know that
  $\O_L \subset \oplus_{i=1}^g\O_{\fP_i} \subset \oplus_{i=1}^g
  M_{r_{\fP_i}}(\Delta_\nu) \subset M_{r_\nu}(\Delta_\nu)$, so
  \[\O_L \subset \oplus_{i=1}^g
  M_{r_{\fP_i}}(\Delta_\nu)\cap \Lambda(a_1, \dots,a_{r_\nu}) =:
  \Gamma = \renewcommand\arraystretch{1.5} \left(\begin{array}{c|c|c}
      \Lambda_1& 0&0\\\hline 0&\ddots&0\\\hline 0&0&\Lambda_g
                                                 \end{array}\right),
\]
where the $\Lambda_i \subset M_{r_{\fP_i}}(\Delta_\nu)$. Since
$\O_\nu$ (as scalar matrices) and $\O_L$ are contained in $\Gamma$,
Lemma~\ref{lemma:IntegralClosure} gives us that $S$, the integral
closure of $R = (\O_\nu\cap K)$ in $L$, is contained in $\Gamma$.
Thus $\O_\nu \otimes_R S \subset\O_\nu \otimes_R \Gamma = \Gamma$.  By
Proposition II.4 of \cite{Serre-LocalFields},
$\O_\nu \otimes_R S\cong \oplus_{i=1}^g \O_{\fP_i}$, so from
$\O_\nu\otimes_R S \subset \Gamma$ , we may assume that
$\O_{\fP_1} \hookrightarrow \Lambda_1$, from which we shall derive a
contradiction.

So we focus on $\Lambda_1$, the upper $r_{\fP_1}\times r_{\fP_1}$
block of
$\oplus_{i=1}^g M_{r_{\fP_i}}(\Delta_\nu) \cap \Lambda(a_1, \dots,
a_{r_\nu})$.  That intersection is contained in
\begin{equation}
\label{eq:blockmatrix}
\renewcommand\arraystretch{1.5}
\Gamma_1 \colonequals \left(\begin{array}{c|c}
M_{r_0}(\Delta_\nu)& M_{r_0\times r_{\fP_1}-r_0}(\Delta_\nu)\\\hline
\bpi M_{r_{\fP_1}-r_0\times r_0}(\Delta_\nu)& M_{r_{\fP_1}-r_0}(\Delta_\nu)\\
\end{array}\right).
\end{equation}

Write $\fP$ for $\fP_1$.  As in
Proposition~\ref{prop:unramifiedlocalextensions} and the discussion
which immediately precedes it, we write $L_\fP = K_\nu(\omega)$
($\O_\fP = \O_\nu[\omega]$) where $\omega$ is an appropriate primitive
root of unity over $K_\nu$, and $h$ is its minimal polynomial over
$K_\nu$.  We know that $h \in \O_\nu[x]$ is monic and irreducible of
degree $[L_\fP: K_\nu] = r_\fP m_\nu$.  Under the embedding
$\O_\fP \hookrightarrow \Gamma_1$ we send
$\omega \mapsto \gamma \in \Gamma_1$.  In particular, $h(\gamma) = 0$.

\textbf{Case 1:} $m_\nu=1$ ($\nu$ splits in $B$), which means
$D_\nu = K_\nu$, $\Delta_\nu = \O_\nu$, and $\fp = \pi\O_\nu$.  Let
$\chi_\gamma = \det(xI - \gamma)$ denote the characteristic polynomial
of $\gamma \in \Gamma_1 \subset M_{r_\fP}(\O_\nu)$.  Since
$\deg(\chi_\gamma) = r_\fP = \deg(h)$ and $\chi_\gamma(\gamma) = 0$,
and $h$ is irreducible, we have $h \mid \chi_\gamma$, hence
$h = \chi_\gamma$ by comparing degrees.  On the other hand viewing
$\chi_\gamma \pmod {\pi\O_\nu}$ means computing the characteristic
polynomial in
$\Gamma_1 \pmod {\pi\O_\nu} \subset M_{r_\fP}(\overline\O_\nu)$, whose
block structure will make $\chi_\gamma$ reducible mod $\pi\O_\nu$. If
$\overline h = \overline \chi_\gamma = \bar h_1 \bar h_2$ with
$\gcd(\bar h_1, \bar h_2) = 1$, then we get a nontrivial factorization
of $h$ over $\O_\nu$ by Hensel's lemma, a contradiction to the
irreducibility of $h$.  If not, then $\overline h = (\overline h_0)^k$
for some irreducible $h_0 \in \overline \O_\nu[x]$ with
$\deg(\overline h_0) < \deg(\overline h)$.  But this means that
$\overline h$ has multiple roots, contrary to
Proposition~\ref{prop:unramifiedlocalextensions}.

\textbf{Case 2:} $m_\nu > 1$.  Now $\deg(h) = r_\fP m_\nu$, and
$\gamma \in \Gamma_1 \subset M_{r_\fP}(\Delta_\nu)$.  As above, let
$W_\nu$ be a maximal unramified extension of $K_\nu$ contained in
$L_\fP \cap D_\nu$; recall $[W_\nu:K_\nu] = m_\nu$.  As a maximal
subfield of $D_\nu$, $W_\nu$ is a splitting field for $D_\nu$ and we
consider $1\otimes \gamma \in M_{r_\fP\cdot m_\nu}(W_\nu)$.  By
Theorem 9.3 of \cite{Reiner-book} the characteristic polynomial
$\chi_{1\otimes \gamma} \in \O_\nu[x]$, which is to say it is
independent of the splitting field for $D_\nu$.  As in the previous
case, we deduce that $h = \chi_{1\otimes \gamma}$.  To maintain the
flow of this argument, we defer the proof of the following lemma to
the end of this proof.

\begin{lemma}\label{lemma:chi-bar-is-reducible}
  $\overline\chi_{1\otimes\gamma}$ is reducible in
  $\overline\O_{W_\nu}[x]$.  In particular,
  $\overline\chi_{1\otimes\gamma} = \overline h_1\overline h_2$ with
  $\overline h_i \in \overline\O_{W_\nu}[x]$ and
  $\deg(\overline h_1) = r_0 < r_\fP$.
\end{lemma}
If $\overline h = \overline\chi_{1\otimes\gamma} = (\overline h_0)^k$
with $\deg(\overline h_0) < \deg(\overline h)$, then as in the
previous case $\overline h$ has multiple roots, a contradiction.  On
the other hand, if $\overline h$ factors into relatively prime
factors, Hensel's lemma will only provide a nontrivial factorization
over $\O_{W_\nu}$ which is actually expected since $h$ is irreducible
over $K_\nu$ and $[W_\nu:K_\nu] = m_\nu > 1$. So we need to dig a bit
deeper.  Let $G = \Gal(L_\fP/K_\nu)$ and $H = \Gal(L_\fP/W_\nu)$.
Then
\[h = \textrm{min}_{K_\nu}(\omega) = \prod_{\sigma \in G} (x -
\sigma(\omega)) = \prod_{\sigma\in G/H} \prod_{\tau \in H} (x -
\tau\sigma(w)).
\]
Let $h_\sigma = \prod_{\tau \in H} (x - \tau\sigma(w)).$ Since
$h_\sigma^\tau = h_\sigma$ for all $\tau \in H$, by Galois theory we
have that $h_\sigma \in \O_{W_\nu}[x]$, and
$\deg(h_\sigma) = |H| = [L_\fP:W_\nu] = r_\fP$.  Moreover since
$L_\fP = K_\nu(\omega) = K_\nu(\sigma(w))$ for any $\sigma \in G$,
$[L_\fP:W_\nu] = \deg(\min_{W_\nu}(\sigma(\omega))$, we see that
$h_\sigma = \min_{W_\nu}(\sigma(\omega))$, and so in particular,
$h = \prod_{\sigma\in G/H} h_\sigma$ is the irreducible factorization
of $h$ in $\O_{W_\nu}[x]$.

Now consider
$\overline h \in \overline\O_\nu[x] \subset \overline\O_{W_\nu}[x]$.
We have that $\overline h = \prod_{\sigma \in G/H} \overline h_\sigma$
and $\overline h_\sigma \in \overline\O_{W_\nu}[x]$. Recall that
$\overline h = \min_{\overline\O_\nu}(\overline \omega)$ and the
isomorphisms
$G = \Gal(L_\fP/K_\nu) \cong \Gal(\overline\O_\fP/\overline\O_\nu)$
and
$H = \Gal(L_\fP/W_\nu) \cong
\Gal(\overline\O_\fP/\overline\O_{W_\nu})$
give that the decomposition
$\overline h = \prod_{\sigma\in G/H} \overline h_\sigma$ is the
irreducible factorization of $\overline h$ in
$\overline\O_{W_\nu}[x]$.  But this contradicts
Lemma~\ref{lemma:chi-bar-is-reducible} which says that
$\overline h = \overline\chi_{1\otimes \gamma}$ has a factor of degree
$s < r_\fP$.
\end{proof}

\begin{proof}[Proof of Lemma~\ref{lemma:chi-bar-is-reducible}]To set
  the notation, we have $\Gamma_1 \subset M_{r_\fP}(\Delta_\nu)$.
  Following \S14 of \cite{Reiner-book}, we can choose
  $\bpi \in \Delta_\nu$ a uniformizer with $\bpi^{m_\nu} = \pi_\nu$
  ($\pi_\nu$ a uniformizer in $K_\nu$), and let $\omega_0$ be a
  primitive $q^{m_\nu}-1$ root of unity over $K_\nu$,
  $q = |\overline\O_\nu|$. So $W_\nu = K_\nu(\omega_0)$ is an
  unramified extension of $K_\nu$ in $D_\nu$ with degree $m_\nu$ over
  $K_\nu$. Then
  \[\Delta_\nu = \bigoplus_{i,j=0}^{m_\nu -1} \O_\nu \omega_0^i\bpi^j
  = \O_\nu[\omega_0,\bpi]; \qquad D_\nu = K_\nu[\omega_0, \bpi].
  \]
  In (14.6) \cite{Reiner-book}, Reiner gives an explicit
  $K_\nu$-isomorphism
  \[D_\nu \to M_{m_\nu}(W_\nu)\cong W_\nu\otimes_{K_\nu} D_\nu \mbox{
    denoted simply } a \mapsto a^*.
  \]
  From (14.7) \cite{Reiner-book}, we see that for $a \in \Delta_\nu$,
  $a^* \in M_{m_\nu}(\O_{W_\nu})$ has upper triangular image in
  $M_{m_\nu}(\overline\O_{W_\nu})$, and for $a \in \bpi \Delta_\nu$,
  $a^*$ has strictly upper triangular image in
  $M_{m_\nu}(\overline\O_{W_\nu})$.  The map $a \mapsto a^*$ now
  extends linearly to
  $M_{r_\fP}(D_\nu) \to M_{r_\fP\cdot m_\nu}(W_\nu)$.

  We first work through a simple, but non-trivial example which will
  make the general proof much easier to understand.

\begin{example}
  Let $r_0=3$, $m_\nu = 2$, and $r_\fP > r_0$ (the exact value will
  not matter).  Then

\[\renewcommand\arraystretch{1.5}\gamma \in \Gamma_1 =
\left(\begin{array}{c|c} M_3(\Delta_\nu)&M_{3\times
      r_\fp-3}(\Delta_\nu)\\\hline \bpi M_{r_\fP-3\times
      3}(\Delta_\nu)&M_{r_\fp-3}(\Delta_\nu)
      \end{array}\right).
    \]

    Then $\overline\chi_{1\otimes \gamma} = \det(-A)$ (the minus is
    for easier typesetting), where

    \newcolumntype{I}{!{\vrule width 2pt}} \newlength\savedwidth
    \newcommand\whline{\noalign{\global\savedwidth\arrayrulewidth
        \global\arrayrulewidth 2pt}%
      \hline \noalign{\global\arrayrulewidth\savedwidth}}
    \begin{equation}
      \label{eq:blockmatrix}
      A = \left[
        \begin{array}[c]{cc|cc|ccIcc|cc}
          a_{11}-x& a_{12}&a_{13}&a_{14}&a_{15}&a_{16}&a_{17}&a_{18}&\dots&\dots\\
          0&a_{22}-x&0&a_{24}&0&a_{26}&0&a_{28}&\dots&\dots\\\hline
          a_{31}&a_{32}&a_{33}-x&a_{34}&a_{35}&a_{36}&a_{37}&a_{38}&\dots&\dots\\
          0&a_{42}&0&a_{44}-x&0&a_{46}&0&a_{48}&\dots&\dots\\\hline
          a_{51}&a_{52}&a_{53}&a_{54}&a_{55}-x&a_{56}&a_{57}&a_{58}&\dots&\dots\\
          0&a_{62}&0&a_{64}&0&a_{66}-x&0&a_{68}&\dots&\dots\\\whline
          0&*&0&*&0&*&*&*&*&*\\
          0&0&0&0&0&0&*&*&*&*\\\hline
          0&*&0&*&0&*&*&*&*&*\\
          0&0&0&0&0&0&*&*&*&*\\\hline
          \vdots&\vdots&\vdots&\vdots&\vdots&\vdots&\vdots&\vdots&\vdots&\vdots\\\hline
          0&*&0&*&0&*&*&*&*&*\\
          0&0&0&0&0&0&*&*&*&*\\
        \end{array}
      \right].
    \end{equation}

    We are going to compute this determinant using minors with
    expansions focusing on columns 1, 3, 5 where the entries in the
    lower left blocks are all zero.  By expanding, we find that after
    three iterations, all of the summands in the determinant will be
    contain the determinant of the same $(r_\fp-3)\times (r_\fp-3)$
    minor. Collecting the other terms gives a degree $r_0=3$ factor.
    The notation we shall use is $A(i_1, \dots, i_r| j_1, \dots j_s)$
    will denote the matrix obtained from $A$ be removing rows
    $i_1, i_2, \dots, i_r$ and columns $j_1, j_2, \dots, j_s$.

    Expanding along the first column, we obtain:
    \[\det(A) = (a_{11}-x)\det A(1|1) + a_{31}\det A(3|1) + a_{51}\det
    A(5|1).
    \]

    In computing $\det A(m|1)$, we now look at what would be column 3
    of the original matrix $A$ which now has only two non-zero entries
    in that column of the minor.

\begin{align*}
  \det A(1|1) & = (a_{33}-x)\det A(1,3|1,3) + a_{53}\det A(1,5|1,3),\\
  \det A(3|1) &= a_{13}\det A(1,3|1,3) -a_{53}\det A(3,5|1,3),\\
  \det A(5|1) &= -a_{13}\det A(1,3|1,3) -(a_{33}-x)\det A(3,5|1,3).
\end{align*}

In this last stage we need to compute the determinant of three minors,
and the expression for each will be a multiple of
$\det A(1,3,5|1,3,5)$ from which we will obtain the claim.

\begin{align*}
  \det A(1,3|1,3) &= (a_{55}-x)\det A(1,3,5|1,3,5),\\
  \det A(1,5|1,3) &= -a_{35}\det A(1,3,5|1,3,5),\\
  \det A(3,5|1,3) &= a_{15}\det A(1,3,5|1,3,5).
\end{align*}

Now by inspection we see that we obtain a product of a cubic and a
factor of degree $r_\fP -3$. \qed
\end{example}

We now turn to the general case.  We have a
\begin{equation*}
  \renewcommand\arraystretch{1.5}
  \gamma \in \Gamma_1 \colonequals 
\left(\begin{array}{c|c}
M_{r_0}(\Delta_\nu)& M_{r_0\times r_{\fP_1}-r_0}(\Delta_\nu)\\\hline
\bpi M_{r_{\fP_1}-r_0\times r_0}(\Delta_\nu)& M_{r_{\fP_1}-r_0}(\Delta_\nu)\\
\end{array}\right).
\end{equation*}
                                     
Then
$1\otimes \gamma \in W_\nu\otimes_{K_\nu} \Gamma_1 \subset M_{r_\fP
  m_\nu}(\O_{W_\nu})$,
with reduced characteristic polynomial
$\chi_{1\otimes \gamma} \in \O_{W_\nu}[x]$ of degree $r_\fP m_\nu$.
Then the reduction, $\overline\chi_{1\otimes\gamma}$, of the
characteristic polynomial modulo $\pi_\nu \O_{W_\nu}$ is given as in
the example above as $\overline\chi_{1\otimes \gamma} = \det(-A)$, where $A$ has entries in
$\overline\O_{W_\nu}[x]$ and is given by (using $s$ for $r_0-1$, $m$
for $m_\nu$ and writing $a_{i,j}$ instead of $a_{ij}$ for clarity)

\newcolumntype{I}{!{\vrule width 2pt}}
\newcommand\whline{\noalign{\global\savedwidth\arrayrulewidth
\global\arrayrulewidth 2pt}
\noalign{\global\arrayrulewidth\savedwidth}}
{\tiny
\begin{equation}
\label{eq:blockmatrix}
\left[
\begin{array}[c]{ccc|ccc|c|cccIccc}
a_{1,1}-x& \dots&a_{1,m}&a_{1,m+1}&\dots&a_{1,2m}&\dots&a_{1,sm+1}&\dots&a_{1,r_0m}&a_{1,r_0m+1}&\dots\\
0&\ddots&\vdots&0&\ddots&\vdots&\dots&0&\ddots&\vdots&\vdots\\
0&\dots&a_{m,m}-x&0&\dots&a_{m,2m}&\dots&0&\dots&a_{m,r_0m}&\vdots\\\hline
a_{m+1,1}&\dots&a_{m+1,m}&a_{m+1,m+1}-x&\dots&a_{m+1,2m}&\dots&a_{m+1,sm+1}&\dots&a_{m+1,r_0m}&\vdots\\
0&\ddots&\vdots&0&\ddots&\vdots&\dots&0&\ddots&\vdots&\vdots\\
0&\dots&a_{2m,m}&0&\dots&a_{2m,2m}-x&\dots&0&\dots&a_{2m,r_0m}&\vdots\\\hline
\vdots&\vdots&\vdots&\vdots&\vdots&\vdots&\ddots&\vdots&\vdots&\vdots&\vdots\\\hline
a_{sm+1,1}&\dots&a_{sm+1,m}&a_{sm+1,m+1}&\dots&a_{sm+1,2m}&\dots&a_{sm+1,sm+1}-x&\dots&a_{sm+1,r_0m}&\vdots\\
0&\ddots&\vdots&0&\ddots&\vdots&\dots&0&\ddots&\vdots&\vdots\\
0&\dots&a_{r_0m,m}&0&\dots&a_{r_0m,2m}&\dots&0&\dots&a_{r_0m,r_0m}-x&\vdots\\\whline
0&*&*&0&*&*&\dots&0&*&*&a_{r_0m+1,r_0m+1}-x\\
\vdots&\ddots&*&\vdots&\ddots&*&\dots&0&\ddots&*&0\\
0&\dots&0&0&\dots&0&\dots&0&\dots&0&0&\ddots\\\hline
\vdots&\vdots&\vdots&\vdots&\vdots&\vdots&\ddots&\vdots&\vdots&\vdots&\vdots\\\hline
0&*&*&0&*&*&\dots&0&*&*&*\\
\vdots&\ddots&*&\vdots&\ddots&*&\dots&0&\ddots&*&0\\
0&\dots&0&0&\dots&0&\dots&0&\dots&0&0&\ddots\\
\end{array}
\right].
\end{equation}
}

We are going to partially compute this determinant, taking advantage
of the zeros in columns $km_\nu+1$, $k = 0, \dots, (r_0-1)$ (below row
$r_0m_\mu$).  The goal is to indicate that after $r_0$ iterations,
every minor will have the same form, and the determinant of this minor
will be therefore be a factor of the reduced characteristic polynomial
(viewed over the residue field).

Computing the determinant by expanding along the first column, we
obtain (still using $s = r_0-1$, $m$ for $m_\nu$, and writing
$a_{i,j}$ for $a_{ij}$ for clarity):
\[\det(A) = (a_{1,1}-x)\det A(1|1) + \sum_{k=1}^s a_{km+1,1}\det A(km + 1|1)\]

So at this stage our determinant involves the determinants of new
minors of the form $\det A(km+1|1)$, $k = 0, \dots, s$, that is over
column 1 and all the rows with nontrivial entries.

In computing each term $\det A(*|1)$, we next want to expand along
what would be column $m+1$ of the original matrix $A$ which now has
only $r_0-1$ non-zero entries in that column of the minor.  The final
simplification we make is that we shall not fuss about the correct
signs of each summand in the expression of the determinant since they
will be immaterial in the end, so we simply denote all of them
as~$\pm$.

\begin{align*}
  \det A(1|1) & = \pm(a_{m+1,m+1}-x)\det A(1,m+1|1,m+1) + \sum_{k=2}^s
                \pm a_{km+1,m+1}\det A(1,km+1|1,m+1).\\
  \det A(m+1|1) &= \pm a_{1,m+1}\det A(1,m+1|1,m+1) \pm
                  a_{2m+1,m+1}\det A(m+1, 2m+1|1,m+1) \pm \cdots\\
              &\quad \pm a_{sm+1,m+1}\det A(m+1,sm+1|1,m+1)\\
              &= \sum_{\substack{k=0\\k \ne 1}}^s \pm a_{km+1,m+1}\det A(km+1,m+1|
  1,m+1).\\
\end{align*}
\begin{align*}
  \det A(2m+1|1) &= \pm a_{1,m+1}\det A(1,2m+1|1,m+1) \pm (a_{m+1,m+1}-x)\det A(m+1,2m+1|1,m+1)\\
                 &\quad \pm a_{3m+1,m+1}\det A(2m+1,3m+1|1,m+1)\pm
                   \cdots\\
                 &\quad \pm a_{sm+1,m+1}\det A(2m+1,sm+1|1,m+1)\\
                 &=  \sum_{\substack{k=0\\k \ne 2}}^s \pm a_{km+1,m+1}\det
  A(2m+1,km+1|1,m+1) \mp x\det A(m+1,2m+1|1,m+1)\\
                 &\vdots\\
\end{align*}
\begin{align*}
  \det A(sm+1|1) &=\pm a_{1,m+1}\det A(1,sm+1|1,m+1) \pm
                   (a_{m+1,m+1}-x)\det A(m+1,sm+1|1,m+1)\\
                 &\quad \pm a_{2m+1,m+1}\det A(2m+1,sm+1|1,m+1)\pm
                   \cdots\\
                 &\quad \pm a_{(s-1)m+1,m+1}\det A((s-1)m+1,sm+1|1,m+1)\\
                 &=  \sum_{\substack{k=0\\k \ne s}}^s \pm a_{km+1,m+1}\det
  A(sm+1,km+1|1,m+1) \mp x\det A(m+1,sm+1|1,m+1)\\
\end{align*}

We need to take stock of what is happening.  Each of these minors has
the form $A(*|1, m+1)$.  It is clear and we continue to evaluate the
determinants of these minors, the next set will have the form
$A(*|1, m+1, 2m+1)$ and after $r_0$ iterations will have the form
$A(*|1, m+1, 2m+1, \dots, sm+1)$.

Also at our current stage of computation, all minors of the form
$A(jm+1,k m+1|1, m+1)$ where $j \ne k \in \{0, \dots, s\}$ also occur.
At each new stage a new row will be added to the minor
$jm+1, km+1, \ell m+1$ where $j,k,l$ range over 0, \dots, $s$ with all
indices distinct.  After $r_0$ iterations, all $r_0$ rows $km+1$,
$k=0, \dots, s$ will necessarily appear in each minor, at which point
we will have
\[\overline \chi_{1\otimes \gamma} = \overline h_1 \cdot \det A(1,
m+1, \dots, sm+1|1, m+1, \dots, sm+1).
\]
Moreover, if $A_0$ was the image of the matrix of $1\otimes \gamma$ in
$M_{r_\fP m_\nu}(\overline \O_{W_\nu})$ (so that the matrix $A$ above
is $A =\det(xI - A_0)$), we would have that
\[\det - A(1, m+1, \dots, sm+1|1, m+1, \dots, sm+1) = det(xI - A_0(1,
\dots, sm+1|1, \dots, sm+1)),
\]
that is the characteristic polynomial of a matrix in
$M_{r_\fP m_\nu - r_0}(\O_{W_\nu})$, and thus having degree
$r_\fp m_\nu - r_0$.  This establishes that
$\overline \chi_{1\otimes \gamma} = \overline h_1 \overline h_2$ where
$\deg \overline h_1 = r_0 < r_\fP$, which completes the proof.
\end{proof}

\section{Constructing Distinguished Representatives of the
  Isomorphism Classes of Maximal Orders}
The goal of this section is to use
the local result (Theorem~\ref{thm:OLembedslocally}) and a
local-global principle to construct a set of representatives of the
isomorphism classes of maximal orders in $B$, and distinguish those
which are guaranteed to contain $\O_L$.  This task involves a number
of steps.  The first is to define a class field $K(\R)/K$ whose degree
is the number of isomorphism classes comprising the genus of $\R$.
Then places $\nu$ of $K$ are chosen so that the Artin symbols
$(\nu, K(\R)/K)$ correspond to generators of $\Gal(K(\R)/K)$ in which
$\nu$ has prescribed splitting behavior in $L$.  Finally, a set of
maximal orders in $B$ are constructed by choosing distinguished
representatives of the local algebras $B_\nu$ using
Theorem~\ref{thm:OLembedslocally}. This broad outline was also
followed in the simpler case of prime degree
\cite{Linowitz-Shemanske-EmbeddingPrimeDegree}, but we include all the
details here to afford careful treatment especially to the
complications which arise due to the presence of partial ramification
for central simple algebras of arbitrary degree.

\subsection{Class fields and the genus of $\R$}

First, we construct a class field, $K(\R)$, associated to the maximal
order $\R$ whose degree over $K$ equals the number of isomorphism
classes of maximal orders in the global algebra $B$.  We then we give
a filtration of the Galois group, $\Gal(K(\R)/K)$, in order to
parametrize the isomorphism classes of maximal orders in $B$.

The class field extension $K(\R)/K$ comes from class field theory by
producing an open subgroup $H_\R$ of finite index in the idele group
$J_K$.  The group $H_\R$ is the product of $K^\times$ and the reduced
norm of an idelic normalizer of $\R$ ($nr(\fN(\R))$, where
$\fN(\R) = J_B \cap \prod_\nu \N(\R_\nu)$, and where $\N(\R_\nu)$ is
the local normalizer of $\R_\nu$ in $B_\nu^\times$, and $J_B$ is the
idele group of $B$.)  We begin by computing the local normalizers and
their reduced norms.

\subsubsection{Normalizers and their reduced
  norms.}\label{sec:localnormalizers}

 Given our maximal order $\R \subset B$ and a place $\nu$
of $K$, we have previously defined the completions $\R_\nu \subseteq B_\nu$.
Let $\N(\R_\nu)$ denote the normalizer of $\R_\nu$ in $B_\nu^\times$,
and $nr_{B_\nu/K_\nu}(\N(\R_\nu))$ its reduced norm in $K_\nu^\times$.
First suppose that $\nu$ is an infinite place, so
$\N(\R_\nu) = B_\nu^\times$.  If $\nu$ splits in $B$, then
$B_\nu\cong \Mat_n(K_\nu)$, so $\N(R_\nu) \cong GL_n(K_\nu)$, and
$nr_{B_\nu/K_\nu}(\N(\R_\nu)) = K_\nu^\times$, while if $\nu$ ramifies
in $B$ (possible only if $n$ is even and $\nu$ is real), then (33.4)
of \cite{Reiner-book} shows that
$nr_{B_\nu/K_\nu}(\N(\R_\nu)) = \mathbb R_+^\times$.

 For a finite place $\nu$, it is clearest to distinguish
three cases.  If $m_\nu = 1$ (the split case), then $B_\nu$ has been
identified with $\Mat_n(K_\nu)$, so by (17.3) and (37.26) of
\cite{Reiner-book}, every maximal order is conjugate by an element of
$B_\nu^\times$ to $\Mat_n(\O_\nu)$, and every normalizer is conjugate
to $\GL_n(\O_\nu) K_\nu^\times$, hence
$nr_{B_\nu/K_\nu}(\N(\R_\nu)) =\O_\nu^\times (K_\nu^\times)^n$.

At the other extreme is $m_\nu = n$ (the totally ramified case), so
that $B_\nu = D_\nu$.  Then $\R_\nu$ is the unique maximal order of
the division algebra $B_\nu$, so $\N(\R_\nu) = B_\nu^\times$, and by
p~153 of \cite{Reiner-book},
$nr(\N(\R_\nu)) = nr_{B_\nu/K_\nu}(B_\nu^\times) = K_\nu^\times.$

Finally, consider the partially ramified case in which
$B_\nu \cong \Mat_{r_\nu}(D_\nu)$ where $D_\nu$ is a central division
algebra of degree $1<m_\nu<n$ over $K_\nu$.  Then $\R_\nu$ is
conjugate to $\Mat_{r_\nu}(\Delta_\nu)$ where $\Delta_\nu$ is the
unique maximal order of $D_\nu$ (17.3 of \cite{Reiner-book}).

From \S 14.5 of \cite{Reiner-book}, we choose a uniformizer
$\bpi = \bpi_{D_\nu}$ for $\Delta_\nu$ so that
$\bpi^{m_\nu} = \pi_\nu \in K_\nu$.  We also take $\omega$ a primitive
$(q^{m_\nu}-1)$th root of unity in $\Delta_\nu$.  Then
$E_\nu = K_\nu(\bpi)$ and $W_\nu = K_\nu(\omega)$ are degree $m_\nu$ field
extensions of $K_\nu$ which are respectively totally ramified and
unramifed and so that
\begin{equation}
  \label{eqn:Deltadecomp}
  \Delta_\nu = \O_\nu[\omega,\bpi] = \bigoplus_{i,j=0}^{m_\nu-1}\O_\nu \omega^i \bpi^j
  \mbox{ and }D_\nu = K_\nu[\omega, \bpi].
\end{equation}

To deduce $nr(\N(\R_\nu))$, it is sufficient to consider
$\R_\nu = M_{r_\nu}(\Delta_\nu)$.  From (37.25)-(37.27) of
\cite{Reiner-book}, we know that
$\N(\R_\nu)/GL_{r_\nu}(\Delta_\nu)K_\nu^\times \cong \Z/m_\nu\Z$.  By
(17.3) of \cite{Reiner-book}, we know that $\bpi \R_\nu$ is the unique
two-sided ideal of $\R_\nu$, which is to say that
$\bpi \in \N(\R_\nu)$.  It follows that $\N(\R_\nu)$ is the group
generated by $\bpi I_{r_\nu}$ and
$GL_{r_\nu}(\Delta_\nu)K_\nu^\times $.  Since
$nr_{D_\nu/K_\nu}(\bpi) = (-1)^{m_\nu-1}\pi_\nu$, we have
$nr_{B_\nu/K_\nu}(\bpi I_{r_\nu})
=(-1)^{r_\nu(m_\nu-1)}\pi_\nu^{r_\nu}.$
Finally, given that the unramified extension $W_\nu/K_\nu$ is
contained in $\Delta_\nu$ and and the norm $N_{W_\nu/K_\nu}$ maps the
units of $\O_{W_\nu}$ onto $\O_\nu^\times$, we may conclude that
$nr(\N(\R_\nu)) = \O_\nu^\times (K_\nu^\times)^{r_\nu}$.

Summarizing, for a finite place $\nu$ of $K$, the computations above
show that
\[ nr(\N(\R_\nu)) = nr_{B_\nu/K_\nu}(\N(\R_\nu))= \O_\nu^\times
(K_\nu^\times)^{r_\nu}, \] for all $1 \le r_\nu \le n$.

Thus with the exception of a real place $\nu$ which ramifies in $B$
(possible only if $n$ is even), for all places $\nu$ we have
$\O_\nu^\times \subset nr(\N(\R_\nu))$, a fact that will be important
in associating a class field to $\R$.

\subsubsection{Parametrizing the Genus}

We know that any two maximal orders in $B$ are locally conjugate at
all (finite) places of $K$, so the number of isomorphism classes can
be computed adelically as follows.  Let $J_B$ be the idele group of
$B$, and let $\fN(\R) = J_B \cap \prod_\nu \N(\R_\nu)$ be the adelic
normalizer of $\R$.  The number of isomorphism classes of maximal
orders is the cardinality of the double coset space
$B^\times\backslash J_B/\fN(\R)$.  To make use of class field theory,
we need to realize this quotient in terms of the arithmetic of
$K$. The reduced norms on the local algebras $B_\nu$ induce a natural
map $nr: J_B \to J_K$, where $J_K$ is the idele group of $K$, and
where for $\tilde \alpha = (\alpha_\nu)_\nu \in J_B$,
$nr(\tilde\alpha) \colonequals (nr_{B_\nu/K_\nu}(\alpha_\nu))_\nu$.

The theorem below was proven (Theorem 3.1 of
\cite{Linowitz-Shemanske-EmbeddingPrimeDegree}) for $\deg_KB = p$ an
odd prime.  The changes required for general degree $n$ involve
handling possible ramification at an infinite place, and pervade the
proof, so we repeat the full argument in the interest of clarity.

\begin{theorem}\label{thm:type_number_bijection}Let $n = \deg_KB \ge
  3$. The reduced norm induces a bijection
  \[
  nr: B^\times\backslash J_B/\fN(\R) \to K^\times \backslash
  J_K/nr(\fN(\R)).
  \]
  The group $K^\times \backslash J_K/nr(\fN(\R))$ is abelian with
  exponent~$n$.
\end{theorem}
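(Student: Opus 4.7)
My proof plan is to split the theorem into the group-theoretic assertion (abelianness and exponent $n$), surjectivity, and injectivity, with injectivity being the main obstacle.

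Abelianness is clear since $J_K$ is abelian. For the exponent, I would verify the local inclusion $(K_\nu^\times)^n\subseteq nr(\N(\R_\nu))$ at every place $\nu$: at split finite primes it is the identity $nr(GL_n(\O_\nu)K_\nu^\times)=\O_\nu^\times(K_\nu^\times)^n$; at totally ramified primes $nr(\N(\R_\nu))=K_\nu^\times$; and the partially ramified case was recorded in the discussion preceding the theorem. A real place of $K$ can be ramified in $B$ only when $n$ is even (since $\mathbb H$ is the only nontrivial real division algebra), in which case $(K_\nu^\times)^n=\mathbb{R}_{>0}=nr(B_\nu^\times)$. Assembling these inclusions yields $J_K^n\subseteq nr(\fN(\R))$, killing the quotient by $n$.

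For surjectivity, a local computation together with the Hasse-Schilling-Maass norm theorem identifies $nr(J_B)$ with the subgroup $J_K^+\subseteq J_K$ of ideles positive at every real prime ramified in $B$. Weak approximation at the real places of $K$ then produces $k\in K^\times$ whose signs at the ramified real places match those of any given $\alpha\in J_K$, so $J_K=K^\times\cdot J_K^+=K^\times\cdot nr(J_B)$, which suffices.

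The main obstacle will be injectivity, because $J_B$ is non-abelian. Given $x,y\in J_B$ with $nr(xy^{-1})\in K^\times\cdot nr(\fN(\R))$, I would set $z=xy^{-1}$ and $H=y\fN(\R)y^{-1}$. Because $nr$ is conjugation-invariant, $nr(H)=nr(\fN(\R))$, so $nr(z)\in K^\times\cdot nr(H)$; showing $z\in B^\times\cdot H$ yields $x=zy\in B^\times y\fN(\R)$, as required. After absorbing an element of $H$ into $z$, I may assume $nr(z)=k\in K^\times$; positivity of $k$ at the real ramified primes allows Hasse-Schilling-Maass to supply $b\in B^\times$ with $nr(b)=k$, so $w:=b^{-1}z$ lies in the reduced-norm-one subgroup $J_B^1$. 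The remaining claim $J_B^1\subseteq B^1\cdot H$ relies on Kneser-Platonov strong approximation for the simply-connected, absolutely simple group $SL_1(B)$; this applies because, for $n\ge 3$, $SL_1(B_\nu)$ is non-compact at every archimedean place (no real division algebra has degree $\ge 3$), so the Eichler condition holds automatically. Since $H_\nu$ is open in $B_\nu^\times$ at every finite $\nu$ and equals $B_\nu^\times$ at every archimedean $\nu$, strong approximation writes $w=b'\cdot N$ with $b'\in B^1$ and $N\in H$, so $z=(bb')N\in B^\times H$, completing injectivity.
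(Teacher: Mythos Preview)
Your proof is correct and follows the standard strong-approximation argument. The paper itself gives no proof here but simply refers to Theorems~3.1 and~3.2 of \cite{Linowitz-Shemanske-EmbeddingPrimeDegree}, where essentially the same line of reasoning (Hasse--Schilling--Maass for surjectivity, Eichler/Kneser strong approximation in $SL_1(B)$ for injectivity, and the local norm computations for the exponent) is carried out in the prime-degree case; your write-up is a faithful extension of that argument to arbitrary $n\ge 3$, including the partially ramified and real-ramified places that do not arise there.
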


\begin{remark}
  The proof below is valid for $n=2$ as well as long as $B$ satisfies
  the Eichler condition.  The map is always surjective, but
  injectivity requires strong approximation.
\end{remark}

\begin{proof}The map is defined in the obvious way with
  $nr(B^\times \tilde \alpha \fN(\R)) = K^\times nr(\tilde
  \alpha)\,nr(\fN(\R))$,

  We first show the mapping is surjective.  Let
  $\tilde a = (a_\nu)_\nu \in J_K$ and
  $K^\times \tilde a\, nr(\fN(\R))$ be the associated double coset in
  $K^\times \backslash J_K/nr(\fN(\R))$.  The weak approximation
  theorem implies the existence of an element $c \in K^\times$ so that
  $c \tilde a$ satisfies $ca_\nu > 0$ for all real places $\nu$ of $K$
  which ramify in $B$ (if any).  Since (replacing $a$ by $ca$) the
  associated double cosets are equal, we may assume without loss that
  $\tilde a$ was chosen with $a_\nu > 0$ are all the real places which
  ramify in $B$.

  Now we appeal to (33.4) of \cite{Reiner-book} which says that for
  any place $\nu$ of $K$, $nr_{B_\nu/K_\nu}(B_\nu) = K_\nu$ with the
  sole exception of $K_\nu \cong \mathbb R$ and $B$ ramified at $\nu$
  in which case the image of the norm is the non-negative reals.  Let
  $S$ be a finite set of places of $K$ containing all the archimedean
  places and all places which ramify in $B$.  By (33.4) and the
  assumptions on $\tilde a$ at the real places, for each place
  $\nu \in S$, there exists $\beta_\nu \in B_\nu^\times$ so that
  $nr_{B_\nu/K_\nu}(\beta_\nu) = a_\nu$.

  Now let $\nu$ be a place of $K$, with $\nu \notin S$. We have that
  $\R_\nu$ is conjugate to $M_n(\O_{K_\nu})$, so let
  $\beta_\nu \in \R_\nu$ be conjugate to
  $\diag(a_\nu, 1,\dots,1) \in M_n(\O_{K_\nu})$.  Then
  $nr_{B_\nu/K_\nu}(\beta_\nu) = nr_{B_\nu/K_\nu}(\diag(a_\nu,
  1,\dots,1)) = a_\nu.$
  So now put $\tilde \beta = (\beta_\nu)_\nu$. It is clear that
  $\tilde \beta = (\beta_\nu)_\nu \in J_B$ and
  $nr_{J_B/J_K}(\tilde \beta) = \tilde a$, which establishes
  surjectivity.

  To prove injectivity, we first prove a claim: The preimage of
  $K^\times nr(\fN(\R))$ under $nr$ is $B^\times J^1_B\fN(\R)$ where
  $J_B^1$ is the kernel of the norm map: $nr:J_B \to J_K$.  It is
  obvious that
  $nr(B^\times J^1_B\fN(\R)) \subset K^\times nr(\fN(\R))$.  Let
  $\tilde \gamma = (\gamma_\nu)_\nu \in J_B$ be such that
  $nr(B^\times \tilde \gamma\fN(\R)) \in K^\times nr(\fN(\R)).$ Then
  $nr(\tilde \gamma) \in K^\times nr(\fN(\R))$, so write
  $nr(\tilde \gamma) = a\cdot nr(\tilde r)$ where $a \in K^\times$ and
  $\tilde r = (r_\nu)_\nu \in \fN(\R)$. We claim that $a$ is positive
  at all the real places which ramify in $B$.  Indeed writing $a_\nu$
  for the image of $a$ under the embedding
  $K \subset K_\nu \cong \mathbb R$, we have that
  $nr_{B_\nu/K_\nu}(\gamma_\nu) = a_\nu nr_{B_\nu/K_\nu}(r_\nu)$, with
  $nr_{B_\nu/K_\nu}(\gamma_\nu), nr_{B_\nu/K_\nu}(r_\nu)>0$.  It
  follows by the Hasse-Schilling-Maass theorem (Theorem 33.15 of
  \cite{Reiner-book}) that there is an element $b \in B^\times$ so
  that $nr_{B/K}(b) = a$, and so that
  $nr(\tilde \gamma) =nr(b) nr(\tilde r)$, or
  $nr(b^{-1})nr(\tilde \gamma)nr(\tilde \gamma^{-1}) = 1 \in J_K$.
  Thus $b^{-1}\tilde \gamma\, \tilde r^{-1} \in J_B^1$, and
  $B^\times\tilde\gamma\fN(\R) = B^\times b^{-1}\tilde \gamma\, \tilde
  r^{-1}\fN(\R) \in B^\times J_B^1\fN(\R)$ as claimed.

  To proceed with the proof of injectivity, suppose that that are
  $\tilde \alpha$, $\tilde \beta \in J_B$ so that
  $nr(B^\times\tilde \alpha\, nr(\fN(\R)) = nr(B^\times\tilde \beta\,
  nr(\fN(\R))$.  Then
  \[K^\times nr(\tilde \alpha) nr(\fN(\R)) = K^\times nr(\tilde \beta)
  nr(\fN(\R)),\]
  which since $J_K$ is abelian, implies that
  $nr(\tilde\alpha^{-1}\tilde \beta) \in K^\times nr(\fN(\R))$, so by
  the above claim,
  $\tilde \alpha^{-1}\tilde \beta \in B^\times J_B^1 \fN(\R).$

  Now the subgroup $B^\times J_B^1$ is the kernel of the homomorphism
  $J_B \to J_K/K^\times$ induced by $nr$, so that
  $\tilde \beta \in B^\times J_B^1 \fN(\R) = B^\times J_B^1\tilde
  \alpha \fN(\R).$
  By VI.iii and VII of \cite{Frohlich-lf},
  $J_B^1 \subset B^\times \tilde\gamma \fN(\R)\tilde\gamma^{-1}$ for
  any $\tilde\gamma \in J_B$, so choosing
  $\tilde\gamma = \tilde\alpha$, we get
  \[\tilde \beta \in B^\times J_B^1\tilde \alpha\, \fN(\R)\subset
  B^\times (B^\times \tilde \alpha\, \fN(\R)\,\tilde\alpha^{-1})\tilde
  \alpha\, \fN(\R) = B^\times\tilde\alpha\,\fN(\R).
  \]
  Thus
  $B^\times \tilde \beta \, \fN(\R) \subseteq B^\times \tilde
  \alpha\,\fN(\R)$, and and by symmetry, we have equality.

  To see that the group has exponent $n$, we note that the local
  factors in $J_K/nr(\fN(\R))$ have the form
  $K_\nu^\times/ nr_{B_\nu/K_\nu}(\N(\R_\nu))$.  From our computations
  above, we see that for $\nu$ a finite place, this quotient is either
  trivial or  equal to
  $K_\nu^\times/(\O_\nu^\times (K_\nu^\times)^r)$ (for $r\mid n$)
  which clearly has exponent $n$,  and that if $\nu$ is
  an infinite place, the quotient is trivial unless $\nu$ is a real
  place which ramifies in $B$.  In that case,
  $K_\nu^\times/ nr(\N(\R_\nu)) = \mathbb R^\times/\mathbb R_+^\times
  \cong \Z/2\Z$,
  but in that case $n$ is necessarily even, so again the factor has
  exponent $n$.
\end{proof}

 We have seen above that the distinct isomorphism classes
of maximal orders in $B$ are in one-to-one correspondence with the
double cosets in the group
$K^\times \backslash J_K/nr(\fN(\R)) \cong G_\R := J_K/H_\R$, where
$H_\R = K^\times nr(\fN(\R))$.  Since $H_\R$ contains a neighborhood
of the identity in $J_K$, it is an open subgroup (Proposition II.6 of
\cite{Higgins}) having finite index, and so by class field theory
\cite{Lang-ANT}, there is a class field, $K(\R)$, associated to it.
The extension $K(\R)/K$ is an abelian extension with
$\Gal(K(\R)/K) \cong G_\R$.   Moreover, a place $\nu$ of
$K$ (possibly infinite) is unramified in $K(\R)$ if and only if
$\O_\nu^\times \subset H_\R$, and splits completely if and only if
$K_\nu^\times \subset H_\R$.  Here if $\nu$ is archimedean, we take
$\O_\nu^\times = K_\nu^\times$.

\begin{remark}\label{remark:classfieldunramifield-orderArtinsymbol}
  From our computations above, we see (unless there is a real place of
  $K$ which ramifies in $B$) that $\O_\nu^\times$ is always contained
  in $H_\R$.  In particular the class field $K(\R)/K$ is unramified
  outside of the real places which ramify in $B$, so contained in the
  narrow class field of $K$.

  It is also useful to make a simple observation about the order of
  Artin symbols in the class field extension $K(\R)/K$.  For a finite
  place $\nu$ of $K$ and $\pi_\nu$ a uniformizer in $K_\nu$, the
  isomorphism $G_\R = J_K/H_\R \to \Gal(K(R)/K)$ associates the image
  of the idele $\tilde\omega_\nu = (\dots, 1, \pi_\nu, 1,\dots)$ in
  $G_\R$ with the Artin symbol $(\nu, K(\R)/K)$.  Since
  $\tilde\omega_\nu^{r_\nu} = 1$ in $G_\R$ we have that the order of
  the Artin symbol (the inertial degree) $f(\nu;K(\R)/K)$ divides
  $r_\nu$.\qed
\end{remark}

Our goal in what follows is to determine a subgroup $H$ of
the Galois group $G = \Gal(K(\R)/K)$ so that each isomorphism class of
maximal order in $B$ corresponding to an element of $H$ contains a
representative which contains the ring of integers $\O_L$.  On the
other hand, the process of identifying the representatives containing
$\O_L$ requires a slightly finer filtration of the group $G$ which we
establish below.

We begin by specifying a set of generators for the group $G$ as Artin
symbols, $(\nu, K(\R)/K)$, in such a way that we can control the
splitting behavior of $\nu$ in the extension $L/K$.  As $L$ is an
arbitrary extension of $K$ of degree $n$, this requires some care.

We have assumed that $L \subset B$. Put $L_0 = K(\R) \cap L$ and
$\wL_0 = \wL \cap K(\R)$ where $\wL$ is the Galois closure of
$L$. Then $L_0 \subset \wL_0$ and we define subgroups of $G$:
$\wH = \Gal(K(\R)/\wL_0) \subseteq H = \Gal(K(\R)/L_0)$.  We write the
finite abelian groups $\wH$, $H/\wH$, and $G/H$ as a direct product of
cyclic groups:
\begin{align}
  \label{eq:param1}
  G/H = &\la\rho_1 H\ra \times \cdots \times \la \rho_r H\ra,\\
  \label{eq:param2}
  H/\wH &= \la \sigma_1 \wH\ra \times \cdots \times \la \sigma_s \wH
          \ra,\\
  \label{eq:param3}
  \wH &= \la \tau_1\ra \times \cdots \times \la \tau_t\ra.
\end{align}

The following proposition is clear.
\begin{prop}\label{prop:Galoisrepn}
  Every element $\varphi \in G$ can be written uniquely as
  $\varphi = \rho_1^{a_1} \cdots \rho_r^{a_r}\sigma_1^{b_1} \cdots
  \sigma_s^{b_s} \tau_1^{c_1}\cdots \tau_t^{c_t}$
  where $0 \le a_i < |\rho_i H|$, $0 \le b_j < |\sigma_j\wH|$, and
  $0\le c_k < |\tau_k|$, with $|\cdot|$ the order of the element in
  the respective group.
\end{prop}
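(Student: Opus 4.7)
The plan is to peel off the tower $\wH \subseteq H \subseteq G$ one layer at a time, exploiting that $G$ is abelian and that each of (\ref{eq:param1}), (\ref{eq:param2}), (\ref{eq:param3}) already supplies a unique factorization on its own layer. I would give first the existence of the expression, then its uniqueness.

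For existence, start with $\varphi \in G$ and project to $G/H$. By (\ref{eq:param1}) there are unique integers $a_1, \dots, a_r$ with $0 \le a_i < |\rho_i H|$ such that $\varphi H = \rho_1^{a_1} \cdots \rho_r^{a_r} H$. Because $G$ is abelian, $h := \varphi \rho_1^{-a_1}\cdots \rho_r^{-a_r}$ lies in $H$. Projecting $h$ to $H/\wH$ and applying (\ref{eq:param2}) yields unique $b_1, \dots, b_s$ in the stated ranges with $h\wH = \sigma_1^{b_1}\cdots \sigma_s^{b_s}\wH$, so $h\sigma_1^{-b_1}\cdots\sigma_s^{-b_s}$ lies in $\wH$, and (\ref{eq:param3}) then produces unique $c_1,\dots,c_t$ with $h\sigma_1^{-b_1}\cdots\sigma_s^{-b_s} = \tau_1^{c_1}\cdots\tau_t^{c_t}$. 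Assembling these pieces, and again using commutativity, gives the desired expression for $\varphi$.

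For uniqueness, suppose $\rho_1^{a_1}\cdots \rho_r^{a_r}\sigma_1^{b_1}\cdots\sigma_s^{b_s}\tau_1^{c_1}\cdots\tau_t^{c_t} = \rho_1^{a'_1}\cdots \rho_r^{a'_r}\sigma_1^{b'_1}\cdots\sigma_s^{b'_s}\tau_1^{c'_1}\cdots\tau_t^{c'_t}$. Projecting this equality to $G/H$ kills the $\sigma_j$ and $\tau_k$ factors, since these lie in $H$, and the uniqueness asserted in (\ref{eq:param1}) forces $a_i = a'_i$ for each $i$. Cancelling those factors (legitimate because $G$ is abelian) and projecting the residual equation in $H$ down to $H/\wH$ kills the $\tau_k$, and (\ref{eq:param2}) gives $b_j = b'_j$. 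A final cancellation reduces to an equality in $\wH$, and (\ref{eq:param3}) yields $c_k = c'_k$.

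There is no genuine obstacle: the statement is a bookkeeping consequence of iterating internal direct product decompositions through a tower of subgroups in an abelian group, with commutativity invoked only to rearrange and cancel factors. The content is notational, fixing normal forms for elements of $G$ that will subsequently let us match global Artin symbols against the local splitting data coming from Theorem~\ref{thm:OLembedslocally}.
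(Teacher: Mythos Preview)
Your argument is correct; the paper itself provides no proof, simply declaring the proposition ``clear.'' Your layer-by-layer existence and uniqueness is exactly the standard justification one would supply, so there is no divergence in approach to discuss.
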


Next we characterize each of these generators in terms of Artin
symbols.  Since the vehicle to accomplish this is the Chebotarev
density theorem which provides an infinite number of choices for
places, we may and do assume without loss that the places we choose to
define the Artin symbols are unramified in both $\wL/K$ and $B$.

First consider the elements $\tau_k \in \wH = \Gal(K(\R)/\wL_0)$.  By
Lemma 7.14 of \cite{Narkiewicz-book}, there exist infinitely many
places $\nu_k$ of $K$ so that $\tau_k = (\nu_k, K(\R)/K)$ and for
which there exists a place $Q_k$ of $\wL$ with inertia degree
$f(Q_k\mid \nu_k) = 1$.  Since $\wL/K$ is Galois (and the place
$\nu_k$ is unramified by assumption), this implies $\nu_k$ splits
completely in $\wL$, hence also in $L$.

Next consider $\sigma_j\wH$ with $\sigma_j \in H = \Gal(K(\R)/L_0)$.
Again by Lemma 7.14 of \cite{Narkiewicz-book}, there exist infinitely
many places $\mu_j$ of $K$ so that $\sigma_j = (\mu_j, K(\R)/K)$ and
for which there exists a place $Q_j$ of $L$ with inertia degree
$f(Q_j\mid \mu_j) = 1$. Here the $\mu_j$ need not split
completely in $L$.

Finally consider $\rho_kH$ with $\rho_k \in G = \Gal(K(\R)/K)$.  By
Chebotarev, there exist infinitely many places $\lambda_i$ of $K$ so
that $\rho_i = (\lambda_i, K(\R)/K)$.  For later convenience, we note
that by standard properties of the Artin symbol,
$\overline\rho_i = \rho_i\vert_{L_0} = (\lambda_i, L_0/K)$ whose order
in $\Gal(L_0/K)$ is equal to the inertia degree $f(\lambda_i; L_0/K)$.

As we said above, we have assumed without loss that all the places
$\lambda_i, \mu_j, \nu_k$ are unramified in $\wL$ and not totally
ramified in $B$.

\subsection{Fixing representatives of the isomorphism classes}

In the previous subsection, we have chosen generators for
$\Gal(K(\R)/K)$ which are characterized as Artin symbols, in
particular associated to certain finite places of $K$ whose splitting
behavior in our given extension $L/K$ is somewhat controlled.  We
recall that the size of the Galois group equals the number of
isomorphism classes of maximal orders in $B$.  At each of those places
$\nu$ associated to an Artin symbol, we consider the local algebra,
$B_\nu$, and specify a certain collection of maximal orders in it (the
number being equal to the order of the Artin symbol $(\nu, K(\R)/K)$),
and loosely speaking, take as many local orders as possible which
contain $\O_L$.  We will then fix representatives of the isomorphism
classes of maximal orders in $B$ by utilizing a local-global
correspondence.

As above, $\R$ is a fixed maximal order of $B$ containing $\O_L$.  For
a finite place $\nu$ of $K$ which is not totally ramified in $B$, we
have $B_\nu \cong M_{r_\nu}(D_\nu)$, with $D_\nu$ a central division
algebra over $K_\nu$ with unique maximal order $\Delta_\nu$, and
$r_\nu > 1$. We fix an apartment in the affine building for
$SL_{r_\nu}(D_\nu)$ which contains the vertex corresponding to the
maximal order $\R_\nu$. We may select a basis
$\{\alpha_1, \dots, \alpha_{r_\nu}\}$ of $D_\nu^{r_\nu}$ so that
$\R_\nu = M_{r_\nu}(\Delta_\nu) \cong \End_{\Delta_\nu}(\Lambda)$
where $\Lambda = \bigoplus_{i=1}^{r_\nu} \Delta_\nu \alpha_i$.  With
$\bpi$ a uniformizer of $\Delta_\nu$, the vertices of the apartment
are in bijective correspondence with those maximal orders of $B_\nu$
which are given as endomorphism rings of lattices of the form
$\bigoplus_{i=1}^{r_\nu} \Delta_\nu \bpi^{a_i} \alpha_i$, the
homothety class of which we abbreviate by
$[a_1, \dots, a_{r_\nu}] \in \Z^{r_\nu}/\Z(1,\dots,1)$.  We shall
identify the vertices of the apartment with these homothety classes of
lattices.

Let's understand how this applies to choosing our representatives for
the isomorphism classes.  Since $L \subset B$, we know (by the
Albert-Brauer-Hasse-Noether theorem) that $m_\nu \mid [L_\fP:K_\nu]$
for all places $\nu$ of $K$ and places $\fP$ of $L$ lying above $\nu$.
For finite places $\nu$, we have that $L_\fP$ embeds as a
$K_\nu$-algebra into $B_\nu \cong M_{r_\fP}(D_\nu)$ where
$r_\fP = [L_\fP:K_\nu]/m_\nu$ is minimal.

Corresponding to various generators of the $\Gal(K(\R)/K)$ we have
chosen finite places $\lambda_i$, $\mu_j$, and $\nu_k$ to parametrize
the Artin symbols which represent the generators. We now consider
maximal orders in the associated local algebras.  For a generic place
$\nu$ among these (which we recall can be assumed unramified in
$L/K$), let $\nu \O_L = \fP_1 \cdots \fP_g$ be the prime factorization
in $L$.  By Theorem~\ref{thm:OLembedslocally}, we know that $\O_L$ is
a subset of precisely those maximal orders (vertices of the apartment)
associated to homothety classes of lattices of the form
$[\mathcal L] = [\underbrace{\ell_1,
  \dots,\ell_1}_{r_{\fP_1}},\underbrace{\ell_2,
  \dots,\ell_2}_{r_{\fP_2}},\dots, \underbrace{\ell_g,
  \dots,\ell_g}_{r_{\fP_g}}]$, $\ell_i \in \Z$.

We will be particularly interested in maximal orders of the form
$\R(k,\ell)$ defined in equation~(\ref{eq:blockstr}).  Because we
shall vary the place $\nu$ in the parametrization below, we will write
$\R_\nu(k,\ell)$ for $\R(k,\ell)$ to make the dependence on $\nu$
explicit. Recall that $\R_\nu(k,\ell)$ corresponds to the homothety
class
$[\underbrace{\ell, \dots, \ell}_k,0, \dots, 0] \in
\Z^{r_\nu}/\Z(1,\dots,1)$ which has type $k\ell\pmod {r_\nu}$.

By equation~(\ref{eq:orderscontainingomega}),
\[ \O_L \subset \bigcap_{\ell_i \in \Z}\big[ \R_\nu(r_{\fP_1},\ell_1)
\ \cap \R_\nu(r_{\fP_1}+r_{\fP_2},\ell_2) \cap \cdots \cap
\R_\nu(r_{\fP_1}+\cdots+r_{\fP_g},\ell_g) \big] = \bigoplus_{\fP\mid
  \nu} M_{r_\fP}(\Delta_\nu) \subset M_{r_\nu}(D_\nu).
\]

Now for the places $\lambda_i$, $\mu_j$, and $\nu_k$ we specified
above to parametrize $G = \Gal(K(\R)/K)$, fix the following local
orders using the decomposition of $G$ into $G/H$, $H/\wH$, and $\wH$:

The places $\nu_k$ all split completely in $L$, so
$L_\fP = K_{\nu_k}$, and $m_\nu \mid [L_\fP:K_{\nu_k}]$ implies
$r_\fP = m_{\nu_k} = 1$, and that $r_{\nu_k} = n$.

So for each place $\nu_k$ ($k = 1, \dots, t$) whose Artin symbol
$(\nu_k, K(\R)/K)) = \tau_k$ is one of the generators of $\wH$, we fix
vertices $\R_{\nu_k}(m,1)$, $m = 0, 1, \dots, |\tau_k| -1$ with
associated homothety classes $[0, \dots, 0]$, $[1,0,\dots, 0]$,
$[1,1,0,\dots,0]$, \dots,
$[\underbrace{1,\dots,1}_{ |\tau_k| -1}, 0,\dots, 0]$. Note that since
$r_{\nu_k} = n$ and $\tau_k$ has exponent $n$, all these homothety
classes correspond to vertices in a fundamental chamber of the
building, and the corresponding maximal orders contain $\O_L$ by
equation~(\ref{eq:orderscontainingomega}).

Now consider the places $\mu_j$ ($j = 1, \dots, s$) whose Artin symbol
$(\mu_j, K(\R)/K)) = \sigma_j$ gives one of the generators
$\sigma_j\wH$ of $H/\wH$. Recall that each $\mu_j$ factors into places
of $L$ with at least one having inertia degree one over $\mu_j$, say
$\fP_1$. Since $\mu_j$ is (by choice) unramified in $L$, we have as in
the previous case $m_{\mu_j} \mid [L_{\fP_1}:K_{\mu_j}]=1$, which
forces $m_{\mu_j} = r_{\fP_1} = 1$ and
$r_{\mu_j} = r_{\mu_j}m_{\mu_j} = n$.  From
equation~(\ref{eq:orderscontainingomega}),
$\O_L \subset \R_{\mu_j}(r_{\fP_1},\ell_1) = \R_{\mu_j}(1,\ell_1)$ for
all $\ell_1 \in \Z$, so we fix vertices $\R_{\mu_j}(1,m)$,
$m = 0, 1, \dots, |\sigma_j \wH| -1$ with associated homothety classes
$[0, \dots, 0]$, $[1,0,\dots, 0]$, $[2,0,\dots,0]$, \dots,
$[|\sigma_j\wH| -1,0,\dots, 0]$ in a fundamental apartment.

Finally consider the places $\lambda_i$ ($i = 1, \dots, r$) whose
Artin symbol $(\lambda_i, K(\R)/K)) = \rho_i$ gives one of the
generators $\rho_iH$ of $G/H$.  It is only here where selectivity can
manifest itself.

Recall that via the isomorphism $G/H \cong \Gal(L_0/K)$
($\rho_iH \leftrightarrow \overline \rho_i$), we know that the order
of $\rho_i H$ is the inertia degree $f(\lambda_i; L_0/K)$ which we
have shown divides $r_{\lambda_i}$.  So we wish to specify
$f(\lambda_i; L_0/K)$ maximal orders in the local algebra.  From
Theorem~\ref{thm:OLembedslocally}, we know that $\O_L$ is contained in
maximal orders corresponding precisely to vertices whose associated
homothety classes are of the form
$[\underbrace{\ell_1, \dots,\ell_1}_{r_{\fP_1}},\underbrace{\ell_2,
  \dots,\ell_2}_{r_{\fP_2}},\dots, \underbrace{\ell_g,
  \dots,\ell_g}_{r_{\fP_g}}]$,
in particular having types
$\sum_{k=1}^g r_{\fP_k} \ell_k \pmod {r_{\lambda_i}}$.  Since the
$\ell_k$ are arbitrary integers, $\O_L$ is contained in maximal orders
having types which are multiples of
$d_{\lambda_i} = \gcd(r_{\fP_1}, \dots, r_{\fP_g})$; note that
$d_{\lambda_i}\mid r_{\lambda_i} = \sum_{k=1}^g r_{\fP_k}$.

\begin{remark}
  We need to be a bit careful in leveraging the above observation.  We
  have shown that $\O_L$ is contained in maximal orders having types a
  multiple of $d_{\lambda_i}$, but the converse is not necessarily
  true.  For example, suppose
  $r_{\lambda_i} = \sum_{k=1}^g r_{\fP_k} = 1 +2$, so that $\O_L$ is
  contained in maximal orders corresponding to homothety classes of
  the form $[\ell_1, \ell_2, \ell_2].$ Now $d_{\lambda_i} = 1$, so
  $\O_L$ is contained in maximal orders associated to homothety
  classes of all types, in particular type 1, but for example $\O_L$
  is not contained in the maximal order corresponding to the homothety
  class of the lattice $[0,1,0]$ since that is not of the prescribed
  form: $[\ell_1, \ell_2, \ell_2]$.  This presents no serious issue,
  but we need to be somewhat careful in selecting our representatives.
\end{remark}

Fix integers $\ell_1, \dots, \ell_k$ so that
\[d_{\lambda_i} = \gcd(r_{\fP_1}, \dots, r_{\fP_g})= r_{\fP_1}\ell_1 +
\cdots + r_{\fP_g}\ell_g,\]
and fix a vertex corresponding to the homothety class
\[[\L] = [\underbrace{\ell_1,
  \dots,\ell_1}_{r_{\fP_1}},\underbrace{\ell_2,
  \dots,\ell_2}_{r_{\fP_2}},\dots, \underbrace{\ell_g,
  \dots,\ell_g}_{r_{\fP_g}}].\]
Using somewhat ad hoc notation, for an integer $a$, let
\[[\L^a] = [\underbrace{a\ell_1,
  \dots,a\ell_1}_{r_{\fP_1}},\underbrace{a\ell_2,
  \dots,a\ell_2}_{r_{\fP_2}},\dots, \underbrace{a\ell_g,
  \dots,a\ell_g}_{r_{\fP_g}}],
\]
which has type $ad_{\lambda_i} \pmod {r_{\lambda_i}}.$ Now
\[ d_{\lambda_i}x \equiv d_{\lambda_i}y \pmod {r_{\lambda_i}} \mbox{
  iff } x \equiv y \pmod{r_{\lambda_i}/d_{\lambda_i}},
\]
so this process will produce $r_{\lambda_i}/d_{\lambda_i}$ maximal
orders which contain $\O_L$, representing every possible type of
maximal order which can contain $\O_L$.  It turns out that in general,
there will be some redundancy when we use these local orders to
construct global ones via a local-global correspondence.  We need to
correct for this, and we begin with an elementary claim.

\begin{lemma}
  With the notation as above except abbreviating $f(\lambda_i;L_0/K)$
  by $f_{\lambda_i}$, we have
  \[\frac{f_{\lambda_i}}{\gcd(d_{\lambda_i}, f_{\lambda_i})} \biggm|
  \frac{r_{\lambda_i}}{d_{\lambda_i}}.
  \]
\end{lemma}

\begin{proof}
  We know that $f_{\lambda_i} \mid r_{\lambda_i}$ and
  $d_{\lambda_i}\mid r_{\lambda_i}$.  Then
  \[\frac{r_{\lambda_i}}{d_{\lambda_i}} \cdot
  \frac{\gcd(d_{\lambda_i}, f_{\lambda_i})}{f_{\lambda_i}} =
  \frac{r_{\lambda_i}}{\lcm(d_{\lambda_i},f_{\lambda_i})},
  \]
  which is clearly integral.
\end{proof}
 Above, we observed that types
$d_{\lambda_i}x \equiv d_{\lambda_i}y \pmod {r_{\lambda_i}}$ iff
$x \equiv y \pmod{r_{\lambda_i}/d_{\lambda_i}}$, so given the lemma,
if we choose orders of types $d_{\lambda_i}x$ with $x$ modulo
$f_{\lambda_i}/{\gcd(d_{\lambda_i}, f_{\lambda_i})}$, they will be
distinct modulo both $r_{\lambda_i}$ and $f_{\lambda_i}$.

We want to fix maximal orders $\R_{\lambda_i}^m$ where
$m \in \Z/f_{\lambda_i}\Z$; we separate those residues which can be
written as $m \equiv d_{\lambda_i}a \pmod{f_{\lambda_i}}$ from those
that cannot.  We put
$\R_{\lambda_i}^{d_{\lambda_i}a}
\colonequals \End_{\Delta_{\lambda_i}}([\L^a])$
for
$a = 0, 1, \dots, f_{\lambda_i}/{\gcd(d_{\lambda_i}, f_{\lambda_i})}
-1$,
and for $m$ one of the remaining
$f_{\lambda_i} - f_{\lambda_i}/{\gcd(d_{\lambda_i}, f_{\lambda_i})}$
residues, choose a maximal order associated to a homothety class of
lattice having type $m$.  Recall that
$f_{\lambda_i} \mid r_{\lambda_i}$, so these choices are possible.

\begin{remark}\label{rem:embedsinab} 
  We note from our remarks above, that $\O_L$ is a subset of
  $\R_{\mu_j}(1,m)$ for every value of $m$, and of $\R_{\nu_k}(m',1)$
  for $0\le m'\le n$.
\end{remark}

Now we use the local-global correspondence for orders to define global
orders from the above local factors. Fix the following notation:

\begin{align*}
  \ba &= (a_i) \in \Z/|\rho_1 H|\Z \times \cdots \times \Z/|\rho_r
        H|\Z,\\
  \bb &= (b_j) \in \Z/|\sigma_1 \wH|\Z \times \cdots \times \Z/
        |\sigma_s \wH|\Z,\\
  \bc &= (c_k) \in \Z/|\tau_1|\Z \times \cdots \times \Z/|\tau_t|\Z.
\end{align*}
Here we assume the coordinates $a_i, b_j, c_k$ are integers which are
the least non-negative residues corresponding to the moduli.  Define
maximal orders, $\D^{\ba,\bb,\bc}$, in $B$ via the local-global
correspondence:
\begin{equation}\label{eq:parametrizedorders}
  \D_\fp^{\ba,\bb,\bc} =
  \begin{cases}
    \R_\fp& \textrm{if }\fp \notin \{\lambda_i, \mu_j, \nu_k\},\\
    \R_{\lambda_i}^{a_i}& \textrm{if } \fp = \lambda_i, i=1, \dots, r,\\
    \R_{\mu_j}^{b_j} \colonequals \R_{\mu_j}(1,b_j)& \textrm{if } \fp = \mu_j, j=1, \dots, s,\\
    \R_{\nu_k}^{c_k} \colonequals \R_{\nu_k}(c_k,1)& \textrm{if } \fp = \nu_k, k=1, \dots, t.\\
  \end{cases}
\end{equation}

We claim that such a collection of maximal orders parametrizes the
isomorphism classes of maximal orders in $B$.  That is, given any
maximal order $\E$ in $B$, we show there are unique tuples
$\ba, \bb, \bc$ so that $\E \cong \D^{\ba,\bb,\bc}$.  To see this we
again employ a local-global principle.  We know that any two maximal
orders in $B$ are equal at almost all places of $K$, so they are
distinguished at only a finite number of places.  We collect
information about those differences by defining a ``distance idele''
associated to the two orders.

Let $\fM$ denote the set of all maximal orders in $B$, and let
$\R_1, \R_2 \in \fM$.  For each place $\nu$ of $K$ we want to define a
local ``type distance'', $td_\nu(\R_{1\nu}, \R_{2\nu})$, which
distinguishes the local orders.  For infinite places $\nu$,
$\R_{1\nu} = \R_{2\nu} = B_\nu$, so (whatever the definition at other
places) it makes sense to define $td_\nu(\R_{1\nu}, \R_{2\nu}) = 0$ in
this case.  We adopt the same convention for a finite place which
totally ramifies in $B$, since there is a unique maximal order in
$B_\nu$.  In the cases where a finite place splits or partially
ramifies, we have already defined the type distance
$td_\nu(\R_{1\nu}, \R_{2\nu})$ in section~\ref{sec:localtheory}.  In
particular, $td_\nu$ is only well-defined modulo $r_\nu$, but this
causes no difficulty.

To return to the problem of parametrizing the isomorphism classes of
maximal orders in $B$, we define a map (called the $G_\R$-valued
distance idele) $\delta: \fM \times \fM \to G_\R = J_K/H_\R$ (where
$H_\R = K^\times nr(\fN(\R))$) as follows: Given $\R_1$,
$\R_2\in \fM$, let $\delta(\R_1, \R_2)$ be the image in $G_\R$ of the
idele $(\pi_\nu^{td_\nu(\R_{1\nu}, \R_{2\nu})})_\nu$, where $\pi_\nu$
is a fixed uniformizing parameter in $K_\nu$ (putting $\pi_\nu=1$ at
the archimedean places).  Note that while the idele is not
well-defined, its image in $G_\R$ is, since at any place where the
type distance might be nontrivial, the local factor in $H_\R$ equals
$\O_\nu^\times (K_\nu^\times)^{r_\nu}$.

That the orders $\{\D^{\ba, \bb, \bc}\}$ parametrize the isomorphism
classes of maximal orders in $B$ follows from the the following
proposition.

\begin{prop}\label{prop:parametrize}
  Let $\R$ be a fixed maximal order in $B$, and consider the
  collection of maximal orders $\D^{\ba,\bb,\bc}$ defined above.

  \begin{compactenum}
  \item If $\E$ is a maximal order in $B$ and $\E \cong \R$, then
    $\delta(\R, \E)$ is trivial.
  \item If $\E \cong \E'$ are maximal orders in $B$, then
    $\delta(\R, \E) = \delta(\R,\E')$.
  \item $\D^{\ba,\bb,\bc} \cong \D^{\ba',\bb',\bc'}$ if and only if
    $\ba = \ba'$, $\bb = \bb'$, and $\bc = \bc'$.
  \item If $\E$ and $\E'$ are maximal orders in $B$, and
    $\delta(\R, \E) = \delta(\R,\E')$, then $\E \cong \E'$.
  \end{compactenum}
\end{prop}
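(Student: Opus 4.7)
The plan is to prove the three parts in sequence, handling (1) and (2) in unison via a single local computation of how $\delta(\R, \cdot)$ transforms under conjugation by an element of $B^\times$, and then to prove (3) by computing $\delta(\R, \D^{\ba,\bb,\bc})$ explicitly and identifying it with an element of $G = Gal(K(\R)/K)$ via Artin reciprocity, using the uniqueness of representation from Proposition~\ref{prop:Galoisrepn}.

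For parts (1) and (2), the engine is a standard affine-building fact: at any finite prime $\fp$ split in $B$ and any $\beta \in B_\fp^\times$, $td_\fp(\R_\fp, \beta \R_\fp \beta^{-1}) \equiv v_\fp(nr(\beta)) \pmod n$, since conjugating a lattice by $\beta$ shifts the sum of coordinates of its homothety class by $v_\fp(\det \beta)$. If $\E = \alpha \R \alpha^{-1}$ with $\alpha \in B^\times$, applying this locally at each split $\fp$ produces a representative idele for $\delta(\R, \E)$ with $\pi_\fp^{v_\fp(nr(\alpha))}$ at each split $\fp$ and $1$ elsewhere. This representative differs from the principal idele $nr(\alpha) \in K^\times \subset H_\R$ by an idele whose split-prime components are units (hence in $nr(\N(\R_\fp))$) and whose remaining components lie in the local factors of $nr(\fN(\R))$ (at archimedean or totally ramified $\nu$ the full $K_\nu^\times$ equals $nr(\N(\R_\nu))$, and at partially ramified $\nu$ one uses the description of $nr(\N(\R_\nu))$ preceding Theorem~\ref{thm:type_number_bijection}). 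Hence its image in $G_\R$ is trivial, giving (1). For (2), if $\E' = \alpha \E \alpha^{-1}$ and $\E_\fp = y_\fp \R_\fp y_\fp^{-1}$ locally at a split $\fp$, then $\E'_\fp = (\alpha y_\fp) \R_\fp (\alpha y_\fp)^{-1}$ so $td_\fp(\R_\fp, \E'_\fp) \equiv td_\fp(\R_\fp, \E_\fp) + v_\fp(nr(\alpha)) \pmod n$. Consequently $\delta(\R, \E') \delta(\R, \E)^{-1}$ is represented by the same auxiliary idele as in (1), which is trivial in $G_\R$.

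For (3), I would compute $\delta(\R, \D^{\ba,\bb,\bc})$ explicitly. By construction the local components of $\D^{\ba,\bb,\bc}$ agree with $\R_\nu$ at every prime $\nu \notin \{\lambda_i, \mu_j, \nu_k\}$, so $\delta$ has trivial local factor there. At the parametrizing split primes, the local orders $\R_{\lambda_i}(a_i,1)$, $\R_{\mu_j}(1,b_j)$, $\R_{\nu_k}(c_k,1)$ have types $a_i$, $b_j$, $c_k$ modulo $n$ respectively (using that $\R(k,\ell)$ has type $k\ell \pmod n$). Hence $\delta(\R, \D^{\ba,\bb,\bc})$ is the class in $G_\R$ of the idele with $\pi_{\lambda_i}^{a_i}$ at $\lambda_i$, $\pi_{\mu_j}^{b_j}$ at $\mu_j$, $\pi_{\nu_k}^{c_k}$ at $\nu_k$, and $1$ elsewhere. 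Under the Artin reciprocity isomorphism $G_\R \cong Gal(K(\R)/K)$, and using the defining relations $\rho_i = (\lambda_i, K(\R)/K)$, $\sigma_j = (\mu_j, K(\R)/K)$, $\tau_k = (\nu_k, K(\R)/K)$, this class maps to $\rho_1^{a_1}\cdots\rho_r^{a_r}\sigma_1^{b_1}\cdots\sigma_s^{b_s}\tau_1^{c_1}\cdots\tau_t^{c_t} \in G$. By Proposition~\ref{prop:Galoisrepn} this Galois element determines the tuple $(\ba, \bb, \bc)$ uniquely. Combining with parts (1) and (2), an isomorphism $\D^{\ba,\bb,\bc} \cong \D^{\ba',\bb',\bc'}$ forces the associated Galois elements, and hence the tuples, to coincide; the converse is trivial.

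The main obstacle is reconciling the paper's convention that $td_\nu \equiv 0$ at non-split primes with the natural ``relative position'' idele $nr(x)$ arising in the bijection of Theorem~\ref{thm:type_number_bijection} (where $\E_\nu = x_\nu \R_\nu x_\nu^{-1}$). At partially ramified $\nu$ the local component of $nr(x)$ is a possibly nontrivial element of $K_\nu^\times$, yet our representative has trivial component there; one must verify that multiplying by the principal idele $nr(\alpha) \in K^\times$ in the quotient $G_\R = J_K/K^\times nr(\fN(\R))$ absorbs this discrepancy, which reduces to the description of $nr(\N(\R_\nu))$ recalled in the preamble to Theorem~\ref{thm:type_number_bijection}. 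For the orders $\D^{\ba,\bb,\bc}$ themselves this issue does not arise, since by construction they differ from $\R$ only at split primes, where the $\delta$ calculation is clean.
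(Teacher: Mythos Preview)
Your approach matches the paper's. For parts (1) and (2) the paper simply cites Proposition~3.3 of \cite{Linowitz-Shemanske-EmbeddingPrimeDegree}, whose argument is exactly the local type-distance computation you give ($td_\fp(\R_\fp,\beta\R_\fp\beta^{-1})\equiv v_\fp(nr(\beta))\pmod n$ at split primes, then comparison with the principal idele $nr(\alpha)\in K^\times$). For part (3) the paper computes $\delta(\D^{\ba,\bb,\bc},\D^{\ba',\bb',\bc'})$ directly rather than your $\delta(\R,\D^{\ba,\bb,\bc})$, but this is cosmetic: both routes identify the relevant class with the product $\prod\rho_i^{a_i}\prod\sigma_j^{b_j}\prod\tau_k^{c_k}$ via Artin reciprocity and then invoke the uniqueness in Proposition~\ref{prop:Galoisrepn}.

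The obstacle you flag at partially ramified primes is genuine: the discussion preceding Theorem~\ref{thm:type_number_bijection} only yields $\O_\nu^\times(K_\nu^\times)^n\subseteq nr(\N(\R_\nu))$, and in fact $nr(\N(\R_\nu))=\O_\nu^\times(K_\nu^\times)^{\kappa_\nu}$, so $nr(\alpha)^{-1}$ need not lie in it. The paper's citation to the prime-degree case sidesteps rather than resolves this (there is no partial ramification when $n$ is prime), so your treatment is no less complete than the paper's own; and you are correct that the issue does not touch part (3), since by construction the $\D^{\ba,\bb,\bc}$ coincide with $\R$ at every ramified prime.
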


\begin{proof}For the first assertion, we may assume that
  $\E = b\R b^{-1}$ for some $b \in B^\times$ by Skolem-Noether.  Thus
  for each place $\nu$, $\E_\nu = b \R_\nu b^{-1}$.  The goal is to
  show that $\delta(\R, \E) = 1$ in $G_\R$, by showing that the
  distance idele which is derived from the local type distances is the
  same as the principal idele $(nr_{B/K}(b))$ which lies in the image
  of $K^\times$ in $J_K$.

  We first verify that the local factors of the principal idele
  $(nr_{B/K}(b))$ are also trivial in $G_\R$.  Indeed the local
  factors in $G_\R$ are trivial at both the infinite and totally
  ramified places with the possible exception of a real place which
  ramifies in $B$, but it follows from (33.4) of \cite{Reiner-book}
  that the norm is positive which is trivial in the local factor
  $\mathbb R^\times/\mathbb R_+^\times$.

  Thus we need only consider places $\nu$ which are split or partially
  ramified in $B$.  We handle these cases together as in our
  description above, and assume $B_\nu$ has been identified with
  $\Mat_{r_\nu}(D_\nu)$ where $D_\nu$ is a central division algebra of
  degree $m_\nu$ over $K_\nu$. As before we let $\Delta_\nu$ be the
  unique maximal order in $D_\nu$.  For convenience assume that the
  identification of $B_\nu$ with $\Mat_{r_\nu}(D_\nu)$ is done in such
  a way that, as described in the previous section, there is a rank
  $r_\nu$ free $\Delta_\nu$-lattice $\Lambda_\nu$ so that
  $\R_\nu = \End_{\Delta_\nu}(\Lambda_\nu)$, and hence
  $\E_\nu = \End_{\Delta_\nu}(b\Lambda_\nu)$ for some
  $b \in \GL_{r_\nu}(D_\nu)$.  Using elementary divisors
  for $\Delta_\nu$-lattices, we may assume without loss that
  $b = \diag(\bpi_{D_\nu}^{a_1}, \dots, \bpi_{D_{\nu}}^{a_{r_\nu}})$.
  Then
  $td_\nu(\R_\nu,\E_\nu) \equiv \sum_{i=1}^{r_\nu} a_i \pmod {r_\nu}.$

  So we shall compare the cosets
  $\pi_\nu^{\sum_{i=1}^{r_\nu} a_i} \O_\nu(K_\nu^\times)^{r_\nu}$ with
  $\pi_\nu^\ell \O_\nu(K_\nu^\times)^{r_\nu}$ where
  $\ell = \ord_{\pi_\nu}(nr_{B_\nu/K_\nu}(b))$.

  We check that indeed
  $\ell \equiv \sum_{i=1}^{r_\nu} a_i\pmod {r_\nu}$ as follows.  With
  $b = \diag(\bpi_{D_\nu}^{a_1}, \dots, \bpi_{D_{\nu}}^{a_{r_\nu}})
  \in B_\nu^\times=GL_{r_\nu}(D_\nu)$,
  we recall from earlier
  $nr_{D_\nu/K_\nu}(\bpi_{D_\nu}) = (-1)^{m_\nu-1}\pi_\nu$, so (up to
  units in $\O_\nu$) $nr_{B_\nu/K_\nu}(b) =\pi_\nu^{\sum a_i}$, hence
  the result.

   Thus we see that $\delta(\R, \E)$ is the image in
  $G_\R$ of the principal idele $(nr_{B/K}(b))_\nu$, so
  $\delta(\R,\E) = 1$ in $G_\R = J_k/K^\times nr(\fN(\R))$ as
  $(nr_{B/K}(b))_\nu$ is in the image of $K^\times$ in $J_K$.

  For the second claim, we may write $\E' = b\E b^{-1}$ for some
  $b \in B^\times$, so $\E'_\nu = b\E_\nu b^{-1}$ for each place
  $\nu$, and as in the previous part, we need only worry about those
  places $\nu$ which split or are partially ramified in $B$.  So as
  before, we write $\R_\nu = \End_{\Delta_\nu}(\Lambda_\nu)$ and
  $\E_\nu = \End_{\Delta_\nu}(\Gamma_\nu)$, so that
  $\E'_\nu = \End_{\Delta_\nu}(b\Gamma_\nu)$, where $\Lambda_\nu$ and
  $\Gamma_\nu$ are free $\Delta_\nu$-lattices of rank $r_\nu$.
  Considering the invariant factors of the lattices $\Lambda_\nu$,
  $\Gamma_\nu$ and $b\Gamma_\nu$, we easily see that
  \[\delta(\R, \E') = \delta(\R,\E)\delta(\E,\E') = \delta(\R, \E),\]
  since $\delta(\E, \E') = 1$ by the computations in the first part.

  For the third statement, we need only show one direction.  Let $\nu$
  be a finite place of $K$ and $\pi_\nu$ the corresponding
  uniformizing parameter of $K_\nu$. Let $\tilde\omega_\nu$ denote the
  idele with $\pi_\nu$ in the $\nu$th place and 1's elsewhere.
  Observe that Artin reciprocity identifies the image of
  $\tilde\omega_\nu$ in $G_R = J_K/H_\R$ with the Artin symbol
  $(\nu, K(\R)/K) \in \Gal(K(\R)/K)$.  Moreover, for two maximal
  orders $\E, \E'$ of $B$, we see that $\delta(\E, \E')$ is equal to
  the image of $\prod_\nu \pi_\nu^{td_\nu(\E, \E')}$ in $G_\R$, and
  hence corresponds to a product of Artin symbols.

  We recall that the orders $\D^{\ba,\bb,\bc}$ differ from our fixed
  maximal order $\R$ only at finite places which were unramified in
  both $L$ and $B$.  At such a place $\nu$, we identified $B_\nu$ with
  $\Mat_{r_\nu}(D_\nu)$ and our representative maximal orders were
  identified as endomorphism rings of homothety classes of lattices
  relative to some fixed basis $\{\alpha_i\}$ of $D_\nu^{r_\nu}$. Now
  referring to the conventions we adopted for the places
  $\lambda_i, \mu_j, \nu_k$ whose associated Artin symbols were used
  to parametrize $\Gal(K(\R)/K)$, we check that $\pmod n$,

  \[td_\nu(\delta(\D^{\ba,\bb,\bc}, \D^{\ba',\bb',\bc'})\equiv
  \begin{cases}
    a_i' - a_i&\mbox{for } \nu = \lambda_i,\\
    b_j' - b_j&\mbox{for } \nu = \mu_j,\\
    c_k'-c_k&\mbox{for } \nu = \nu_k.
  \end{cases}
  \]

  It follows that
  \[
  \delta(\D^{\ba,\bb,\bc}, \D^{\ba',\bb',\bc'}) \leftrightarrow
  \prod_{i=1}^g \rho_i^{a_i' - a_i} \prod_{j=1}^s \sigma_j^{b_j'-b_j}
  \prod_{k=1}^t \tau_k^{c_k'-c_k} \in \Gal(K(\R)/K),
  \]
  which is trivial if and only if $\ba=\ba'$, $\bb = \bb'$, and
  $\bc=\bc'$ by Proposition~\ref{prop:Galoisrepn}.

  Finally for the last statement, let $\E$ and $\E'$ be maximal orders
  in $B$ with $\delta(\R, \E) = \delta(\R,\E')$.  Suppose to the
  contrary that $\E \not\cong \E'$.  Then $\E \cong \D^{\ba,\bb,\bc}$,
  $\E' \cong \D^{\ba',\bb',\bc'}$ where at least one of $\ba,\bb,\bc$
  differs from $\ba', \bb', \bc'$. Since $\R = \D^{\0,\0,\0}$, the
  computations above show that
  $\delta(\R, \D^{\ba,\bb,\bc}) \ne \delta(\R, \D^{\ba',\bb',\bc'})$,
  but by part (2) of the proposition
  $\delta(\R,\E) = \delta(\R, \D^{\ba,\bb,\bc})$, and
  $\delta(\R,\E') = \delta(\R, \D^{\ba',\bb',\bc'})$, which provides
  the desired contradiction.  This completes the proof.
\end{proof}

 We now summarize our efforts in this section labeling
those isomorphism classes of maximal orders in $B$ which contain (a
representative containing) the ring of integers $\O_L$.  Above we have
parametrized the isomorphism classes of maximal orders by the set
$\{\D^{\ba,\bb,\bc}\}$ given in
equation~(\ref{eq:parametrizedorders}).  These orders are locally
equal to $\R$ at all places except those designated previously as a
member of the set
$T=\{\lambda_1, \dots, \lambda_r, \mu_1, \dots,\mu_s, \nu_1,
\dots\nu_t\}$.
By this assumption, for $\fp \notin T$, we have
$\O_L \subset \D_\fp^{\ba,\bb,\bc}$.  For $\fp = \mu_j$ or $\nu_k$, we
also have $\O_L \subset \D_\fp^{\ba,\bb,\bc}$ by
Remark~\ref{rem:embedsinab}. Finally,
$\O_L \subset \R_{\lambda_i} = \D_{\lambda_i}^{\mathbf 0,\bb,\bc}$ for
all the places $\lambda_i$.  Thus, for all finite $\fp$ in $K$,
$\O_L \subset \D_{\fp}^{\ba,\bb,\bc}$ for all $\bb, \bc$, and
$\ba = \mathbf 0$, and so by the local-global correspondence,
$\O_L \subset \D^{\mathbf 0,\bb,\bc}$ for all $\bb, \bc$.  But these
orders $\{\D^{\mathbf 0,\bb,\bc}\}$ are precisely those which
correspond to the elements of $H = \Gal(K(\R)/L_0)$.  We summarize
this as

\begin{thm}\label{thm:lowerbound} The ring of integers, $\O_L$ is
  contained in at least $[K(\R):L_0]$ of the $[K(\R):K]$
  representatives $\{\D^{\ba,\bb,\bc}\}$. Specifically,
  $\O_L \subset \D^{\mathbf 0,\bb,\bc}$ for all $\bb, \bc$.
\end{thm}

\section{ Recovering global selectivity results}

In this section we recover and refine some global results on selective
orders.  Recall that we have a central simple algebra $B = M_r(D)$
where $D$ is a central division algebra of degree $m$ over a number
field $K$. We have an extension $L/K$ of degree $n =rm$ which embeds
in $B$, and we have fixed a maximal order $\R$ of $B$ which contains
$\O_L$.  Associated to $\R$ is a class field, $K(\R)$, and we have set
$L_0 = K(\R) \cap L$.  We assume $n \ge 3$.

\subsection{Simple lower bounds}
It is immediate from Theorem~\ref{thm:lowerbound}, that the ring of
integers, $\O_L$ is contained in at least $[K(\R):L_0]$ maximal orders
which lie in distinct isomorphism classes, so speaking informally, at
least $1/[L_0:K]$ of the isomorphism classes ``admit an embedding'' of
$\O_L$.  

Having established $1/[L_0:K]$ as a lower bound, we next show that the
degree $[L_0:K]$ is further constrained as a divisor of
$[L:K] = n = rm$ in an interesting way.

\begin{prop}Let $B = M_r(D)$ where $D$ is a central division algebra
  of degree $m$ over a number field $K$ which contains an extension
  $L/K$ of degree $n = rm$. Fix a maximal order $\R$ of $B$ which
  contains the ring of integers $\O_L$.  As above, associate to $\R$ a
  class field extension $K(\R)/K$, and put $L_0 = L \cap K(\R)$.  If
  no real place of $K$ ramifies in $B$, then
  $[L_0:K] \mid r\cdot\gcd(r,m)$; otherwise
  $[L_0:K] \mid 2r\cdot\gcd(r,m)$.  In particular if $\gcd(r,m) = 1$,
  then $[L_0:K] \mid r$ or $2r$.
\end{prop}
\begin{remark}
  The proposition above extends the simplest form of Carmona's
  \cite{Arenas-Carmona-2014-Selectivity-Division-Algebras} result,
  where he shows that for an arbitrary division algebra, the
  selectivity proportion is 1/2 or 1, which we see from above with
  $r = 1$.
\end{remark}

\begin{proof}
  This proof follows the lines of a similar argument in
  \cite{Arenas-Carmona-2014-Selectivity-Division-Algebras}.  For each
  place $\nu$ of $K$, we have written $B_\nu \cong M_{r_\nu}(D_\nu)$
  where $D_\nu$ is a central division algebra of degree $m_\nu$ over
  $K_\nu$, and of course where $n = rm = r_\nu m_\nu$.  By (32.17) of
  \cite{Reiner-book}, we know that $m = \lcm\{m_\nu\}$ where the lcm
  is taken over all places of $K$.

  To begin, let $p$ be an odd prime, and assume $p^t \| m$, $t \ge 1$.
  Also assume that $p^s \| r$ with $s \ge 0$.  Then there must be a
  place $\nu$ of $K$ with $p^t \| m_\nu$.  Since $p$ is odd, we know
  $\nu$ is a finite place of $K$.  Since $L$ embeds in $B$, we know
  for every place $\fP$ lying above $\nu$ that
  \[m_\nu \mid [L_\fP:K_\nu] = [L_\fP:(L_0)_{\fP\cap
    L_0}][(L_0)_{\fP\cap L_0}: K_\nu] = [L_\fP:(L_0)_{\fP\cap L_0}]
  f(\nu; L_0/K),\]
  the last equality since $K(\R)/K$ is abelian and unramified at all
  finite places.  By
  Remark~\ref{remark:classfieldunramifield-orderArtinsymbol}, we know
  that $f(\nu; K(\R)/K) \mid r_\nu$, hence so does $f(\nu; L_0/K)$.
  Now since $n = rm = r_\nu m_\nu$ and $p^t \| m_\nu$ we have
  $p^s \| r_\nu$, so $\ord_p(f(\nu;L_0/K)) \le s$.  Let
  $t_0 = \max\{0, t-s\}$.  Then \newline
  $p^{t_0} \mid [L_\fP: (L_0)_{\fP\cap L_0}]$.  It follows that
  $p^{t_0} \mid [L:L_0]$. Therefore
  \[
  \ord_p[L_0:K] \le s+t - t_0 =
  \begin{cases}
    2s& s\le t\\
    s+t& s > t\\
  \end{cases}
  \ =\ord_p(r) + \ord_p(\gcd(r,m)).
  \]
  Which gives the result for the odd primes $p$.  When $p = 2$, if
  $4 \mid m$, the same argument gives the correct bounds with
  $p=2$. Moreover, even if $2\| m$, but there is some finite place
  $\nu$ with $2\mid m_\nu$, the argument is valid.  It is only in the
  case that $2 \| m$, but for no finite place does $2 \mid m_\nu$ that
  the argument fails, and in that case we must have a real place which
  ramifies in~$B$.
\end{proof}

\subsection{The effect of ramification on the bounds}
The ramification of the central simple algebra $B$ has an interesting
impact on selectivity.  In
Theorem~\ref{thm:divisionalgebra-noselectivity}, we show that if there
is a finite place of $K$ which is totally ramified in $B$, there is
never selectivity; that is, every isomorphism class of maximal orders
in $B$ admits an embedding of $\O_L$.  At the other end of the
spectrum, if for each finite place of $K$, $B$ is split, then the
selectivity proportion is either 1 (no selectivity) or $1/[L_0:K]$.
In the case of a central simple algebra $B$ which has partial
ramification at some places, the proportion of isomorphism classes
which admit an embedding of $\O_L$ will be of the form $m/[L_0:K]$ for
an integer $m$ which is the cardinality of a certain subgroup of
$\Gal(L_0/K)$ related to the finite places of $K$ which are partially
ramified in $B$.

Let's begin with the case of a totally ramified prime.  
This theorem was proven for algebras of odd prime degree in
\cite{Linowitz-Shemanske-EmbeddingPrimeDegree}, but remains valid for
general degree $n\ge 3$.

\begin{thm}\label{thm:divisionalgebra-noselectivity}
  Suppose there is a finite place $\nu$ of $K$ which is totally
  ramified in $B$, that is, $m_\nu=n$. Let $\Omega \subset \O_L$ be
  any $\O_K$-order.  Then every maximal order in $B$ admits an
  embedding of $\Omega$.  In particular, there can never be
  selectivity.
\end{thm}

\begin{proof}
  It is enough to show that every maximal order in $B$ admits an
  embedding of $\O_L$.  Since $B_\nu$ is a division algebra, there is
  a unique maximal order $\R_\nu$ in $B_\nu$ whose normalizer is all
  of $B_\nu^\times$ and so $K_\nu^\times$, the norm of the normalizer,
  is contained in $H_\R$.  This means that that $\nu$ splits
  completely in the class field $K(\R)$, hence also in
  $L_0 = K(\R)\cap L$.

  On the other hand, by the Albert-Brauer-Hasse-Noether theorem,
  $m_\nu=n \mid [L_\fP:K_\nu]$ for all places $\fP$ of $L$ lying above
  $\nu$.  This means that $\nu$ is inert in $L$, hence also in $L_0$.
  Since $L_0/K$ is unramified (at $\nu$), we have
  $[L_0:K] = f(\fP|\nu)$.  But $\nu$ splits completely in $L_0$, so
  $[L_0:K] = f(\fP|\nu) =1$, and the result is now immediate from
  Theorem~\ref{thm:lowerbound}.
\end{proof}

To go further, we shall utilize the notion of the distance idele and
Proposition~\ref{prop:parametrize} to characterize those isomorphism
classes of maximal orders which admit an embedding of $\O_L$.  We have
assumed that $\O_L \subset \R$.  If there is an embedding of $\O_L$
into a maximal order $\E$, then $\O_L$ is contained in a conjugate
maximal order, $\E'$, and by Proposition~\ref{prop:parametrize}, the
distance ideles $\delta(\R, \E)$ and $\delta(\R,\E')$ are equal.  So
the idea is to assume that $\O_L$ is contained in maximal orders $\R$
and $\E$, and to consider their distance idele
$\delta(\R, \E) \in G_\R$.  Recall that $G_\R \cong \Gal(K(\R)/K)$,
and that we parametrized the isomorphism classes of maximal orders in
$B$ with representatives $\D^{\ba,\bb,\bc}$ having the property that
viewing the distance idele as an element of $\Gal(K(\R)/K)$ we have
(see Proposition~\ref{prop:Galoisrepn})
\[\delta(\R, \D^{\ba,\bb,\bc}) =
\rho_1^{a_1} \cdots \rho_r^{a_r}\sigma_1^{b_1} \cdots \sigma_s^{b_s}
\tau_1^{c_1}\cdots \tau_t^{c_t}.
\]

In Theorem~\ref{thm:lowerbound}, we see that $\O_L$ is always
contained in those representatives where
\[\delta(\R, \D^{\ba,\bb,\bc}) =
\rho_1^{0} \cdots \rho_r^{0}\sigma_1^{b_1} \cdots \sigma_s^{b_s}
\tau_1^{c_1}\cdots \tau_t^{c_t},
\]
that is, those elements whose distance idele lies in
$H = \Gal(K(\R)/L_0) \le G=\Gal(K(\R)/K)$.  To delve more deeply, we
now view $\delta(\R,\E)|_{L_0} \in \Gal(L_0/K) \cong G/H$.  We sketch
the framework we employ.

Recall some notation from the introduction.  Given a finite place
$\nu$ of $K$, and the local index $m_\nu$, we know that
$m_\nu \mid [L_\fP:K_\nu]$ for all places $\fP$ of $L$ lying above
$\nu$.  Further, we set $r_\fP = [L_\fP:K_\nu]/m_\nu$. Next, we
defined:
\begin{align}
  d_{\nu} = \gcd_{\fP\mid \nu}r_\fP &= 
                                      \gcd_{\fP\mid \nu} \frac{[L_\fP:(L_0)_{\fP\cap L_0}][
                                      (L_0)_{\fP\cap L_0}:K_{\nu}]}{m_{\nu}}\\
                                    &= \gcd_{\fP\mid \nu} \frac{[L_\fP:(L_0)_{\fP\cap L_0}]
                                      f(\nu; L_0/K)}{m_{\nu}}\\
                                    &= \gcd_{\fP\mid \nu}([L_\fP:(L_0)_{\fP\cap
                                      L_0}])\frac{f(\nu; L_0/K)}{m_{\nu}}.
                                      \label{eq:Thequantityd} 
\end{align}

Now recall that the type distance, $\delta(\R, \E)$, is the image of
the idele $(\pi_\nu^{td_\nu(\R_\nu,\E_\nu)})_\nu$ in $G_\R$, and
viewed as an element of $\Gal(K(\R)/K)$ it is a product of (powers of)
Artin symbols.  So we can view
\[\delta(\R, \E)|_{L_0} = \prod_{\nu \mbox{ \scriptsize finite}} (\nu,
L_0/K)^{td_\nu(\R_\nu,\E_\nu)},
\]
where we recall that the Artin symbol, $(\nu, L_0/K)$, has order equal
to the inertia degree $f(\nu; L_0/K)$.  Finally, from
Theorem~\ref{thm:OLembedslocally}, we know that if
$\O_L \subset \R\cap \E$, and $\nu$ is unramified in $L$, then
$td_\nu(\R_\nu, \E_\nu)$ will be divisible by $d_\nu$.  Now consider
Equation (\ref{eq:Thequantityd}).  If $m_\nu = 1$ (that is, if
$B_\nu \cong M_n(K_\nu)$), then $d_\nu$ is divisible by
$f(\nu; L_0/K)$, the order of $(\nu, L_0/K)$, so that factor in
$\delta(\R,\E)|_{L_0}$ will be trivial.  So we see it is here that the
partially ramified primes play a critical role in producing a
selectivity proportion strictly between $1/[L_0:K]$ and 1.

Motivated by the above remarks, let $\lambda_1, \dots, \lambda_\ell$
be the set places which are partially ramified in $B$.
\begin{remark}
  In order to use Theorem~\ref{thm:OLembedslocally} below, we must
  also assume that the $\lambda_i$ are all unramified in $L$.
\end{remark}

For each place, $\lambda_i$, we have the quantity $d_{\lambda_i}$ from
Equation~(\ref{eq:Thequantityd}).  Let $G_0$ be the subgroup of
$\Gal(L_0/K)$ generated by the Artin symbols:
\[G_0 = \la (\lambda_1, L_0/K)^{d_{\lambda_1}}, \dots, (\lambda_\ell,
L_0/K)^{d_{\lambda_\ell}}\ra \le \Gal(L_0/K).
\]

Write $f_{\lambda_i}$ for $f(\lambda_i; L_0/K)$.  From
equation~(\ref{eq:Thequantityd}), we know that
\[d_{\lambda_i} = \gcd_{\fP\mid \lambda_i}([L_\fP:(L_0)_{\fP\cap
  L_0}])\frac{f_{\lambda_i}}{m_{\lambda_i}},
\]
and we know the order of $(\lambda_i, L_0/K)$ is $f_{\lambda_i}$. So
if
$m_{\lambda_i} \mid \gcd_{\fP\mid \lambda_i}([L_\lambda:(L_0)_{\fP\cap
  L_0}])$,
we know that $(\lambda_i, L_0/K)^{d_{\lambda_i}} = 1\in G_0$;
otherwise it generates a cyclic subgroup of order
$f_{\lambda_i}/\gcd(d_{\lambda_i}, f_{\lambda_i})$.  For our use
below, we want to define maximal orders, $\Gamma_{\lambda_i}^a$, in
the local algebra $B_{\lambda_i}$ with type distance,
$td_{\lambda_i}(R_{\lambda_i},\Gamma_{\lambda_i}^a) = d_{\lambda_i}a$
with
$a = 0, 1, \dots, f_{\lambda_i}/\gcd(d_{\lambda_i}, f_{\lambda_i})-1$.
We do this in exactly the same way as we did in the previous section
just prior to Remark~\ref{rem:embedsinab} where we defined the orders
$\R_{\lambda_i}^{a_i}$, so we do not repeat the argument here,
although we do reiterate that we are assuming that the places
$\lambda_i$ are unramified in $L$ so as to leverage
Theorem~\ref{thm:OLembedslocally}.

\begin{thm}\label{thm:MainSummaryA}\
  Assume that $\O_L \subset \R \subset B$. For every $\sigma \in G_0$,
  there exists a maximal order $\E$ in $B$ so that $\O_L \subset \E$,
  and viewing the distance idele, $\delta(\R,\E)$, as an element of
  $\Gal(K(\R)/K)$, we have that
  $\delta(\R,\E)|_{L_0} =\sigma \in G_0$.
\end{thm}

\begin{proof} Let
  $\sigma_i = (\lambda_i,L_0/K)^{d_{\lambda_i}} \in G_0$ be a
  generator of $G_0$, and write
  $\sigma = \prod_{i=1}^\ell \sigma_i^{a_i}$, where we understand the
  expression may not be unique. Define a maximal order $\E$ of $B$ via
  the local-global correspondence by specifying:
  \[\E_\nu =
  \begin{cases}
    \R_\nu& \mbox {for }\nu \notin \{\lambda_1, \dots, \lambda_\ell\},\\
    \Gamma_{\lambda_i}^{a_i}& \mbox {for } \nu = \lambda_i,\quad i =
    1, \dots, \ell.
  \end{cases}
  \]
  Then, viewing $\delta(\R,\E)$ as an element of $\Gal(K(\R)/K)$, we
  have
  $\delta(\R,\E) = \prod_{i=1}^\ell (\lambda_i;
  K(R)/K)^{d_{\lambda_i}a_i}$,
  so that $\delta(\R,\E)|_{L_0} =\sigma \in G_0$.
\end{proof}

\begin{remark}
  Presuming that $\sigma \ne 1$ in the above theorem,
  $\E \cong \D^{\ba,\bb,\bc}$ for some $\ba \ne \mathbf 0$, meaning
  that the proportion of isomorphism classes admiting an embedding of
  $\O_L$ is greater than $1/[L_0:K]$.  Indeed, this theorem says that
  the proportion is at least $|G_0|/[L_0:K]$.
\end{remark}

Now we would like some sort of converse, meaning if there is
selectivity, then this is an upper bound as well. We have the
following qualified result.

\begin{thm}\label{thm:MainSummaryB}\
  Assume that $\O_L \subset \R \subset B$.  Let $\E$ be another
  maximal order in $B$, and let $\delta(\R,\E)$ denote the distance
  idele. Assume further, that any place $\nu$ for which
  $td_\nu(\R_\nu,\E_\nu) \not\equiv 0 \pmod {r_\nu}$ is unramified in
  $L$.  If $\O_L \subset \E$, then $\delta(\R,\E)|_{L_0} \in G_0$.
\end{thm}

\begin{proof}Let $\delta(\R,\E)\in G_\R=J_K/H_\R$ be the distance
  idele.  Let $\nu$ be any place for which
  $td_\nu(\R_\nu,\E_\nu) \not\equiv 0 \pmod {r_\nu}$.  By assumption,
  we have that $\nu$ is unramified in $L$, and so, by conventions on
  the type distance, $\nu$ is a finite place and not totally ramified
  in $B$.  Since $O_L \subset \E_\nu$, by
  Theorem~\ref{thm:OLembedslocally}, we have that
  $td_\nu(\R_\nu,\E_\nu)$ is divisible by $d_\nu$, which means the
  local factor of the Artin symbol coming from $\delta(\R,\E)$ has the
  form $(\nu; K(\R)/K)^{d_\nu \ell}$ for some integer $\ell$. So
  restricted to $L_0/K$, the Artin symbol becomes
  $(\nu; L_0/K)^{d_\nu\ell}$.  By Equation~(\ref{eq:Thequantityd}), if
  $\nu$ is unramified in $B$, then $m_\nu = 1$ which implies
  $d_\nu \equiv 0 \pmod {f(\nu; L_0/K)}$, but $f(\nu,L_0/K)$ is the
  order of the Artin symbol $(\nu;L_0/K)$, so this factor is trivial.
  The only factors left are those which correspond to partially
  ramified places in $B$, and so it is clear that
  $\delta(\R,\E)|_{L_0} \in G_0$.
\end{proof}

We can summarize the previous two theorems as:
\begin{thm}\label{thm:MainSummary}\
  Let $\lambda_1, \dots, \lambda_\ell$ be the set of finite places of
  $K$ which are partially ramified in $B$. Assume the $\lambda_i$ are
  all unramified in $L$.  Let
  \[G_0 = \la (\lambda_1, L_0/K)^{d_{\lambda_1}}, \dots,
  (\lambda_\ell, L_0/K)^{d_{\lambda_\ell}}\ra \le \Gal(L_0/K),
  \]
  be the subgroup generated by powers of the Artin symbols
  $(\lambda_i, L_0/K)$.  The proportion of isomorphism classes of
  maximal orders which admit an embedding of $\O_L$ is at least
  $\ds\frac{|G_0|}{[L_0:K]}$, and if $L \subseteq K(\R)$ (so in
  particular, $L$ is unramified at all the finite places of $K$), then
  the proportion is exactly $\ds\frac{|G_0|}{[L_0:K]}$.
\end{thm}

\section{An Example}

We give a simple example of
Theorem~\ref{thm:MainSummary}. Computations are done with Magma
\cite{Magma}.

Let $K = \Q(\sqrt{-39})$.  Then the ideal class group of $K$ is cyclic
of order 4, hence the Hilbert class field of $K$, $H_K$ has Galois
group, $\Gal(H_K/K)$, cyclic of order 4. The rational prime 61 splits
completely in $K$, and there are four primes of $H_K$ lying above 61.
So put $61\O_K = \fp_1\fp_2$.  Since $H_K/K$ is Galois, the only way
for $61\O_{H_K}$ to factor as the product of four distinct primes in
$H_K$ is for each of the primes $\fp_i$ to have inertia degrees
$f(\fp_1; H_K/K) = f(\fp_2; H_K/K) =2$.

To construct our central simple algebra, we specify Hasse invariants.
Let $m_{\fp_1} = m_{\fp_2} = 2$ and $m_\nu = 1$ for all other places
$\nu$ of $K$.  Taking Hasse invariants $1/m_\nu$ for all places $\nu$
of $K$, the short exact sequence of Brauer groups (e.g., (32.13) of
\cite{Reiner-book}) guarantees the existence of a degree 4 central
simple $K$-algebra $B = M_2(D)$ having the prescribed Hasse invariants.

Let $L = H_K$. The field $L$ satisfies the conditions of the
Albert-Brauer-Hasse-Noether theorem, so $L$ embeds in $B$ as a
$K$-algebra.  Now let $\R$ be any maximal order of $B$ which contains
$\O_L$, and $K(\R)$ the associated class field.

Since $K$ has no real embeddings, its narrow class field and its
Hilbert class field coincide, so $K(\R) \subseteq H_K$.

To show the reverse containment, recall that the class field $K(\R)$
arises field class field theory via the quotient $J_K/H_\R$ where
$H_\R$ is characterized by information about the local norm of
normalizers of the $\R_\nu$ which we characterized in section 3.  It
is then easy to check that the class group associated to $H_K$
contains $H_\R$, so $H_K \subseteq K(\R)$.

Thus $L = H_K = K(\R) = L_0$.

We now refer to the notation of Theorem~\ref{thm:MainSummary}.  We
have $\lambda_1 = \fp_1$ and $\lambda_2 = \fp_2$ and via
Equation~(\ref{eq:Thequantityd}), compute
$d_{\lambda_1} = d_{\lambda_2} = 1$. So $G_0$ is generated by the
Artin symbols $(\fp_1, H_K/K)$ and $(\fp_2, H_K/K)$ each of which has
order 2, but as $\Gal(H_K/K)$ is cyclic of order 4, they must be
equal, so that $|G_0| = 2$.  So while the standard lower bound for the
selectivity proportion is $1/[L_0:K] = 1/4$, we have
$|G_0|/[L_0:K] = 2/4 = 1/2$.




\bibliographystyle{amsplain}

\providecommand{\bysame}{\leavevmode\hbox to3em{\hrulefill}\thinspace}
\providecommand{\MR}{\relax\ifhmode\unskip\space\fi MR }
\providecommand{\MRhref}[2]{%
  \href{http://www.ams.org/mathscinet-getitem?mr=#1}{#2} }
\providecommand{\href}[2]{#2}

\end{document}